\newtheorem{theorem}{Theorem}[section]
\newtheorem{corollary}[theorem]{Corollary}
\newtheorem{lemma}[theorem]{Lemma}
\newtheorem{proposition}[theorem]{Proposition}
\newtheorem{example}[theorem]{Example}
\newtheorem{remark}[theorem]{Remark}
\newtheorem{definition}[theorem]{Definition}
\begin{document}

\title[Upper and lower Lebesgue classification]{Upper and lower Lebesgue classification of multivalued mappings of two variables}

\author{Olena Karlova}

\address{Chernivtsi National University, Department of Mathematical Analysis,
Kotsjubyns'koho 2, Chernivtsi 58012, Ukraine}\email{maslenizza.ua@gmail.com}

\author{Volodymyr Mykhaylyuk}
\address{Chernivtsi National University, Department of Mathematical Analysis,
Kotsjubyns'koho 2, Chernivtsi 58012, Ukraine}
\email{vmykhaylyuk@ukr.net}

\subjclass[2000]{Primary 54C60, 54C08, 28B20}

\keywords{compact-valued mappings, upper semi-continuity, lower semi-continuity, Lebesgue classifications of multi-functions}

\begin{abstract} We introduce a functional Lebesgue classification of multivalued mappings and obtain results on upper and lower Lebesgue classifications of multivalued mappings $F:X\times Y\to Z$ for wide classes of spaces $X$, $Y$ and $Z$.

\end{abstract}

\maketitle
\begin{center}
{\it Dedicated to the memory of Yu.B.~Zelinskii}
\end{center}

\bigskip

\section{Introduction}

Investigations of Lebesgue classification of separately continuous single-valued functions (i.e., functions of several variables which are continuous with respect to each variable) and theirs analogs were started by H.~Lebesgue in \cite{L} and by K.~Kuratowski in \cite{Kur}. These investigations were continued in papers of many mathematicians (see, for example, \cite{Mon}, \cite{Kur1}, \cite{Ban}, \cite{Bur}, \cite{MMM}, \cite{K}   and the literature given there).

Some analogs of Lebesgue classification are also known for multivalued maps and are connected with the upper and lower semi-continuity.
Namely, a multivalued map $F:X\to [0,1]$ defined on a topological space $X$ is called {\it upper (lower) semi-continuous at $x_0\in X$} if for every open  set $U$  in $[0,1]$ with $F(x_0)\subseteq U$ ($F(x_0)\cap U\ne\emptyset$) the set
$$
F^+(U)=\{x\in X:F(x)\subseteq U\}
$$
$$
(F^-(U)=\{x\in X:F(x)\cap U\ne\emptyset\})
$$
is a neighborhood of $x_0$ in $X$. A multivalued map $F:X\to [0,1]$ is {\it continuous at $x_0\in X$} if it is  upper and lower semi-continuous at $x_0$ simultaneously. It is well known that a multivalued map $F:X\to [0,1]$ is continuous at $x_0\in X$ if and only if it is continuous at $x_0$ as a single-valued map with values in the space of all non-empty subsets of $[0,1]$ with the Vietoris topology.

For topological spaces $X$ and $Y$ let ${\rm U}(X,Y)$ (${\rm L}(X,Y)$) stands for the collection of all upper (lower) semi-continuous multivalued maps $F:X\to Y$.

Let $X$ and $Y$ be topological spaces and $\alpha$ be an at most countable ordinal. A multivalued map $F:X\to Y$ belongs to the
\begin{enumerate}
\item[-] {\it $\alpha$'th upper Lebesgue  class} if for every open set $A\subseteq Y$ the set $F^+(A)$ belongs to the $\alpha$'th  additive class  in $X$;
\item[-] {\it $\alpha$'th lower Lebesgue  class} if for every open set $A\subseteq Y$ the set $F^-(A)$  belongs to the $\alpha$'th  additive class  in $X$.
\end{enumerate}
Let us observe that Lebesgue classes are often called Borel classes.

For topological spaces $X$ and $Y$ the collection of all multivalued maps $F:X\to Y$ of the $\alpha$'th upper (lower) Lebesgue class   we denote by ${\rm U}_\alpha(X,Y)$ (${\rm L}_\alpha(X,Y)$).

For a multivalued map $F:X\times Y\to Z$, $x\in X$ and $y\in Y$ we write  $F^x(y)=F_y(x)=F(x,y)$.

Recall that a topological space is said to be {\it perfect} if every its closed subset is $G_\delta$.

G.~Kwieci\'{n}ska  proved in \cite{Kwi} the following result on Lebesgue classification of multivalued maps of two variables.

\begin{theorem}[\cite{Kwi}]\label{th:0.1}
Let  $(X,d)$ be a metric space, $\mathcal T$ be a topology on $X$, $D\subseteq X$ be an at most countable set, $(U(x):x\in X)$ be a family of ${\mathcal T}$-open sets $U(x)\subseteq X$, $Y$ and $Z$  be perfectly normal spaces, $\alpha$ be an at most countable ordinal and $F:X\times Y\to Z$ be a compact-valued (multivalued) map satisfying the conditions
\begin{enumerate}
\item[(a)] $D$ is dense in $(X,{\mathcal T})$;

\item[(b)] for every $x\in D$ the set $A(x)=\{u\in X:x\in U(u)\}$ belongs to the $\alpha$'th additive class in $(X,d)$;

\item[(c)] for every $x\in X$ the sequence $(B_n(x))_{n\in\omega}$ of sets $$B_n(x)=U(x)\cap \{u\in X:d(x,u)<\tfrac1n\}$$ forms a base of $(X,{\mathcal T})$ at $x$;

\item[(d)] for every $y\in Y$ the multivalued map $F_y:(X,{\mathcal T})\to Z$ is continuous;

\item[(e)] for every $x\in D$, the multivalued map $F^x:Y\to Z$ belongs to the $\alpha$'th lower (upper) Lebesgue class.
\end{enumerate}
Then $F$ is of the upper (lower) Lebesgue class $\alpha+1$ on the product $(X,d)\times Y$.
\end{theorem}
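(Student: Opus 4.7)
The plan is to prove that $F^+(W)$ (in the lower case of (e)) or $F^-(W)$ (in the upper case) belongs to the $(\alpha+1)$st additive class of $(X,d)\times Y$ for every open $W\subseteq Z$. Define
$V_{x,k}:=A(x)\cap\{u\in X:d(x,u)<1/k\}=\{u\in X:x\in B_k(u)\}$
for $x\in D$ and $k\in\omega$; by (b) these lie in the $\alpha$th additive class of $(X,d)$. Using the perfect normality of $Z$, fix an exhaustion $W=\bigcup_n W_n$ with each $W_n$ open and $\overline{W_n}\subseteq W_{n+1}$, and set $H_n:=Z\setminus\overline{W_n}$.

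The heart of the proof is a telescoping characterisation. For the lower case of (e) I claim
\[
F^+(W)=\bigcup_{k,n\in\omega}\tilde A_{k,n},\qquad \tilde A_{k,n}:=\bigl\{(u,y):F(x,y)\cap H_n=\emptyset\text{ for every }x\in D\cap B_k(u)\bigr\}.
\]
For $\subseteq$, given $(u,y)\in F^+(W)$, compactness of $F(u,y)$ supplies $n$ with $F(u,y)\subseteq W_n$, and the upper $\mathcal T$-continuity of $F_y$ from (d), together with the base property (c), produces $k$ with $F(u',y)\subseteq W_n\subseteq\overline{W_n}=Z\setminus H_n$ for all $u'\in B_k(u)$. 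For $\supseteq$, if $(u,y)\in\tilde A_{k,n}$ but $F(u,y)\cap H_n\neq\emptyset$, then lower $\mathcal T$-continuity of $F_y$ makes $F_y^-(H_n)$ a $\mathcal T$-open neighbourhood of $u$, and (a), (c) together produce $x\in D\cap B_k(u)$ with $F(x,y)\cap H_n\neq\emptyset$, a contradiction; hence $F(u,y)\subseteq\overline{W_n}\subseteq W_{n+1}\subseteq W$. The upper case of (e) proceeds symmetrically with $C_{k,n}:=\bigl\{(u,y):F(x,y)\cap\overline{W_n}\neq\emptyset\text{ for every }x\in D\cap B_k(u)\bigr\}$ in place of $\tilde A_{k,n}$: the forward inclusion is via lower $\mathcal T$-continuity of $F_y$ and $W_n\subseteq\overline{W_n}$, and the reverse via upper $\mathcal T$-continuity of $F_y$ and $\mathcal T$-density of $D$, which forces $F(u,y)\cap\overline{W_n}\neq\emptyset$ and hence $F(u,y)\cap W\neq\emptyset$.

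To read off the Borel class, write
\[
\tilde A_{k,n}=(X\times Y)\setminus\bigcup_{x\in D}\bigl(V_{x,k}\times (F^x)^-(H_n)\bigr),\quad C_{k,n}=\bigcap_{x\in D}\bigl(((X\setminus V_{x,k})\times Y)\cup(X\times (F^x)^-(\overline{W_n}))\bigr).
\]
In the lower case, (b) and (e) put every factor of the first formula in the $\alpha$th additive class, so $\tilde A_{k,n}$ is of the $\alpha$th multiplicative class. In the upper case, the identity $(F^x)^-(\overline{W_n})=Y\setminus(F^x)^+(H_n)$ together with (e) puts $(F^x)^-(\overline{W_n})$ in the $\alpha$th multiplicative class, so each factor of the second formula is of that class and hence so is $C_{k,n}$. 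Taking countable unions over $(k,n)$ then places $F^+(W)$ and $F^-(W)$ in the $(\alpha+1)$st additive class, as desired. The main obstacle is the telescoping step itself: replacing ``$\forall u'\in B_k(u)$'' by ``$\forall x\in D\cap B_k(u)$'' loses information, and the slack $W_n\subseteq\overline{W_n}\subseteq W_{n+1}$ combined with the substitution of the closed set $\overline{W_n}$ (or its open complement $H_n$) for $W_n$ is exactly what absorbs this loss and yields class $\alpha+1$ rather than $\alpha+2$.
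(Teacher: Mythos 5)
Your proof is correct, but it follows a genuinely different route from the paper's. The paper never argues Theorem~\ref{th:0.1} directly: it deduces it (Remark~\ref{rem:1}, with $A_{u,n}=A(u)\cap\{v:d(u,v)<\frac1n\}$, $x_{u,n}=u$) from the general Theorem~\ref{th:1.2} (resp.\ Theorem~\ref{th:2.2}), whose proof fixes a (functionally) closed set $W=\varphi^{-1}(0)$, builds the two-scale shells $G_m=\varphi^{-1}([0,\frac1m))$ and $W_m=\varphi^{-1}([0,\frac1m])$, and identifies $F^{-}(W)$ (resp.\ $F^{+}(W)$) with $C=\bigcap_{m}\bigcup_{n\geq m}C_{n,m}$, where $C_{m,n}=\bigcup_{i\in I_n}\bigl(A_{i,n}\times F_{i,n}^{-}(G_m)\bigr)$, the class of $C_{m,n}$ being controlled by the locally-finite-union Lemma~\ref{lem:1.1} (Theorem~\ref{th:2.1} in the non-functional setting). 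Yours is the complemented, direct version: you characterize $F^{+}(W)$ (resp.\ $F^{-}(W)$) for \emph{open} $W$ as a plain countable union $\bigcup_{k,n}$ of sets defined by a universal condition over $D\cap B_k(u)$; you replace the $\varphi$-shells by a normality interposition $\overline{W_n}\subseteq W_{n+1}$, so only perfect normality of $Z$ is invoked (perfect normality of $Y$ becomes superfluous in your argument, whereas the paper's Theorem~\ref{th:2.2} route needs $Y$ perfect); and, crucially, you exploit the countability of $D$ to take countable unions and intersections over $x\in D$ directly, which is exactly what lets you bypass Lemma~\ref{lem:1.1}/Theorem~\ref{th:2.1} and avoid the $\bigcap_m\bigcup_{n\geq m}$ tail structure. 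The underlying mechanism is the same dualized (approximate $F(u,y)$ by $F(x,y)$ for $\mathcal T$-close $x\in D$, with the slack between $W_n$, $\overline{W_n}$ and $W_{n+1}$ absorbing the substitution; compactness used only in the direction yielding the upper class, matching the paper's split between the compact-valued and general cases). Two points you pass over are routine but worth stating: $V_{x,k}$ is of additive class $\alpha$ because in a metric space open balls are $F_\sigma$, hence of every additive class $\alpha\geq1$, and additive classes are closed under finite intersections; and cylinders such as $V_{x,k}\times Y$ keep their class as continuous preimages under projections. Note also that your reading of (c) — each $B_k(u)$ is a $\mathcal T$-neighbourhood of $u$, which your density step needs — is the same reading the paper's Remark~\ref{rem:1} tacitly requires for the families $(A_{u,n}:u\in D)$ to cover $X$. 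In sum: your argument is shorter and self-contained for the metric, countable-$D$ setting of Theorem~\ref{th:0.1}; the paper's detour through $\sigma$-(functionally) locally finite covers is what buys the generalization to uncountable index sets and non-metrizable $X$ in Theorems~\ref{th:1.2} and~\ref{th:2.2}.
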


Another variants of Lebesgue classification of multivalued maps of two variables were obtained in \cite{KS}.

\begin{theorem}[\cite{KS}]\label{th:0.2}
Let  $X$ be a metrizable space, $D$ be a dense subset of $X$, $Y$ be a perfect space, $Z$  be a perfectly normal space, $\alpha$ be an at most countable ordinal and $F:X\times Y\to Z$ be a compact-valued (multivalued) map satisfying the conditions
\begin{enumerate}
\item[(a)] for every $y\in Y$ the multivalued map $F_y:X\to Z$ is continuous;

\item[(b)] for every $x\in D$, the multivalued map $F^x:Y\to Z$ is of the $\alpha$'th lower (upper) Lebesgue class.
\end{enumerate}
Then $F$ is of the  upper (lower) Lebesgue class $\alpha+1$ on the product $X\times Y$.
\end{theorem}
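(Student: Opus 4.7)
The plan is to reduce the multivalued problem to a single-valued one via the perfect normality of $Z$, and then to realize the resulting real-valued function as a pointwise limit of functions of the $\alpha$-th Baire class by means of a partition of unity on $X$. I describe the upper case (lower hypothesis implying upper conclusion); the lower case is symmetric, with the maximum in place of the minimum below.

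Fix an open set $G \subseteq Z$. By perfect normality of $Z$ choose a continuous $g : Z \to [0, 1]$ with $G = \{z : g(z) > 0\}$, and define
$$
h(x, y) = \min \{ g(z) : z \in F(x, y)\}, \qquad (x, y) \in X \times Y,
$$
so that $F^+(G) = \{h > 0\}$; it suffices to show that $h$ is of the $(\alpha+1)$-th Baire class. The continuity of $F_y$ together with compactness of $F_y(x)$ forces $h_y$ to be continuous on $X$ for every $y$. For $x \in D$, compactness of $F^x(y)$ yields
$$
\{h^x < t\} = (F^x)^-(\{g < t\}), \qquad \{h^x > t\} = Y \setminus \bigcap_{n \in \mathbb{N}} (F^x)^-\bigl(\{g < t + 1/n\}\bigr),
$$
and the lower Lebesgue class hypothesis on $F^x$ puts both sets in the $\alpha$-th additive class of $Y$; hence $h^x$ is of the $\alpha$-th Baire class on $Y$ for every $x \in D$.

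Fix a compatible metric $d$ on $X$. For each $n$, paracompactness of $X$ provides a locally finite open cover $\{U_{n, i}\}_{i \in I_n}$ of $X$ with $\mathrm{diam}\, U_{n, i} < 1/n$ and $U_{n, i} \cap D \neq \emptyset$; pick $d_{n, i} \in D \cap U_{n, i}$ and a continuous partition of unity $\{\varphi_{n, i}\}$ subordinate to the cover, and set
$$
h_n(x, y) = \sum_{i \in I_n} \varphi_{n, i}(x) \, h(d_{n, i}, y).
$$
Each summand is a product of a function continuous in $x$ and a function of Baire class $\alpha$ in $y$, hence jointly of Baire class $\alpha$ on $X \times Y$, and the locally finite sum preserves that class. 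Continuity of $h_y$ at $x$ together with $d(x, d_{n, i}) < 1/n$ whenever $\varphi_{n, i}(x) \neq 0$ gives $h_n(x, y) \to h(x, y)$ pointwise; therefore $h$ is of Baire class $\alpha + 1$, and $F^+(G) = \{h > 0\}$ is of the $(\alpha + 1)$-th additive class.

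The main obstacle is the joint classification step: that a product of a continuous function in $x$ and a Baire-$\alpha$ function in $y$ is jointly of Baire class $\alpha$, and that a pointwise limit of such functions lies in Baire class $\alpha + 1$ on $X \times Y$. This is delicate because $Y$ is only assumed perfect rather than perfectly normal, so the Borel hierarchy on the product cannot be invoked automatically. The argument will have to exploit that $X$ is metric and the explicit tensor-product shape of the approximants $h_n$, and use countable-union closure of the additive classes in $X \times Y$, to reduce joint statements to the already-established classification of individual sections.
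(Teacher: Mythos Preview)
Your reduction to the scalar function $h(x,y)=\min\{g(z):z\in F(x,y)\}$ is natural, but the claim that $h^x$ is of Baire class~$\alpha$ for $x\in D$ is not justified. From $F^x\in{\rm L}_\alpha(Y,Z)$ you correctly obtain $\{h^x<t\}=(F^x)^-(\{g<t\})$ in the $\alpha$-th additive class of $Y$. Your second formula, however,
\[
\{h^x>t\}=Y\setminus\bigcap_{n}(F^x)^-(\{g<t+\tfrac1n\}),
\]
is a countable \emph{union} of complements of additive-$\alpha$ sets, i.e.\ of multiplicative-$\alpha$ sets, and this only lands in the $(\alpha+1)$-th additive class. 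Nothing in the hypothesis controls $(F^x)^+$ at level $\alpha$; compactness merely gives ${\rm L}_\alpha\subseteq{\rm U}_{\alpha+1}$. Hence $h^x$ is not of class~$\alpha$, the summands $\varphi_{n,i}(x)\,h(d_{n,i},y)$ are not of class~$\alpha$ on $X\times Y$, and the final estimate on $\{h>0\}$ slips to class $\alpha+2$. The partition-of-unity device compounds the problem: level sets of a convex combination $\sum_i\varphi_{n,i}(x)\,h(d_{n,i},y)$ do not decompose into rectangles, so even the one-sided information you do have on $h(d_{n,i},\cdot)$ cannot be read off from $h_n$. The ``main obstacle'' you flag at the end is therefore not the only one, and it is left unresolved in any case.

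The paper's route sidesteps both issues by never forming $h$ or any sum. Working with a closed $W=g^{-1}(0)\subseteq Z$ and $G_m=g^{-1}([0,\tfrac1m))$, it shows directly that
\[
F^-(W)=\bigcap_{m}\ \bigcup_{n\ge m}\ \bigcup_{i\in I_n}\Bigl(U_{i,n}\times (F^{d_{n,i}})^-(G_m)\Bigr),
\]
where $\{U_{i,n}\}_{i\in I_n}$ is a locally finite open cover of $X$ of mesh $<1/n$ and $d_{n,i}\in D\cap U_{i,n}$. Only the lower preimages $(F^{d_{n,i}})^-(G_m)$ enter, and these lie in the $\alpha$-th additive class by hypothesis; a separate lemma on locally finite unions keeps the inner union at level~$\alpha$, and the outer $\bigcap\bigcup$ yields multiplicative class $\alpha+1$. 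Compactness of the values is invoked exactly once, to check the set equality (if $F(x_0,y_0)\cap W=\emptyset$ then $F(x_0,y_0)\subseteq Z\setminus\overline{G_{m_0}}$ for some $m_0$). Your argument can be repaired along these lines: drop the partition of unity, keep only the covers $\{U_{i,n}\}$ and the one inequality $\{h(d_{n,i},\cdot)<\tfrac1m\}=(F^{d_{n,i}})^-(G_m)$, and assemble the set $F^-(W)$ directly.
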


In this paper we introduce a functional Lebesgue classification of multivalued maps and generalize Theorems \ref{th:0.1} and \ref{th:0.2} for a wide class of topological spaces $X$.

\section{multivalued maps of the $\alpha$'th upper and lower Lebesgue functional class}

\begin{definition}\label{def:1} {\rm
  Let $X$ and $Y$ be topological spaces. A multivalued map $F:X\to Y$ is called
  \begin{enumerate}
  \item[-] {\it upper (lower) functionally semi-continuous multivalued map} if $F^+(A)$ ($F^-(A)$) is a functionally open set for every functionally open set $A\subseteq Y$;
  \item[-] {\it strongly upper (lower) functionally semi-continuous multivalued map} if $F^+(A)$ ($F^-(A)$) is a functionally open set for every open set $A\subseteq Y$;
  \item[-] {\it weakly upper (lower) functionally semi-continuous multivalued map} if $F^+(A)$ ($F^-(A)$) is an open set for every functionally open set $A\subseteq Y$.
  \end{enumerate}
 }
\end{definition}

The collection of all upper (lower, strongly upper, strongly lower, weakly upper, weakly lower) functional semi-continuous multivalued maps $F:X\to Y$ we denote by ${\rm U}^f(X)$ (${\rm L}^f(X)$, ${\rm U}_s^f(X)$, ${\rm L}_s^f(X)$, ${\rm U}_w^f(X)$, ${\rm L}_w^f(X)$).

Let $X$ be a topological space. We denote by ${\mathcal A}_0(X)$ and ${\mathcal M}_0(X)$ the systems of all functionally open and functionally closed subsets of $X$, respectively. For every at most countable ordinal $\alpha\geq 1$ the system of all unions $\bigcup_{n\in\omega} A_n$ of sets $A_n$ from $\bigcup_{\xi<\alpha}{\mathcal M}_\alpha(X)$ we denote by ${\mathcal A}_\alpha (X)$, and the system of all intersections $\bigcap_{n\in\omega} M_n$ of sets $M_n$ from $\bigcup _{\xi<\alpha}{\mathcal A}_\alpha(X)$ we denote by ${\mathcal M}_\alpha (X)$. Clearly,
$$
{\mathcal A}_\alpha (X)=\{X\setminus M:M\in {\mathcal M}_\alpha (X)\}.
$$

\begin{definition}\label{def:2} {\rm
  Let $X$ and $Y$ be topological spaces and $\alpha$ be an at most countable ordinal. A multivalued map $F:X\to Y$ is called
  \begin{enumerate}
  \item[-] {\it upper Lebesgue functional class $\alpha$ multivalued map} if $F^+(A)\in {\mathcal A}_\alpha(X)$ for every functionally open set $A\subseteq Y$;
  \item[-] {\it lower Lebesgue functional class $\alpha$ multivalued map} if $F^-(A)\in {\mathcal A}_\alpha(X)$ for every functionally open set $A\subseteq Y$.
  \end{enumerate}    }
\end{definition}

It is easy to see that $F$ is an upper (lower) Lebesgue functional class $\alpha$ multivalued map if and only if $F^-(B)\in {\mathcal A}_\alpha(X)$ ($F^+(B)\in {\mathcal A}_\alpha(X)$) for every functionally closed set $B\subseteq Y$.

For topological spaces $X$ and $Y$ the collection of all multivalued maps $F:X\to Y$ of the $\alpha$'th upper (lower) Lebesgue functional class    we denote by ${\rm U}^f_\alpha(X)$ (${\rm L}^f_\alpha(X)$). Notice that ${\rm U}^f_0(X)={\rm U}^f(X)$ and ${\rm L}^f_0(X)={\rm L}^f(X)$.

The facts below follow  easily from the definitions and we omit their proof.

\begin{proposition}\label{properties}
Let $X$ and $Y$ be a topological spaces, $F:X\to Y$ be a multivalued map and $\alpha$ be at most countable ordinal.
\begin{enumerate}
\item ${\rm U}(X,Y)\cup {\rm U}^f(X,Y)\subseteq {\rm U}_w^f(X,Y)$ and ${\rm L}(X,Y)\cup {\rm L}^f(X,Y)\subseteq {\rm L}_w^f(X,Y)$.

\item ${\rm U}_s^f(X,Y)\subseteq {\rm U}(X,Y)\cap {\rm U}^f(X,Y)$ and ${\rm L}_s^f(X,Y)\subseteq {\rm L}(X,Y)\cap {\rm L}^f(X,Y)$.

\item If\, $X$ is perfectly normal, then  ${\rm U}_s^f(X,Y)={\rm U}(X,Y)\subseteq {\rm U}^f(X,Y)={\rm U}_w^f(X,Y)$ and ${\rm L}_s^f(X,Y)={\rm L}(X,Y)\subseteq {\rm L}^f(X,Y)={\rm L}_w^f(X,Y)$.

\item If\,\, $Y$ is completely regular, then ${\rm U}^f(X,Y)\subseteq {\rm U}(X,Y)$ and ${\rm L}^f(X,Y)\subseteq {\rm L}(X,Y)$.

\item If\, $Y$ is perfectly normal, then  ${\rm U}_s^f(X,Y)={\rm U}^f(X,Y)\subseteq {\rm U}(X,Y)={\rm U}_w^f(X,Y)$ and ${\rm L}_s^f(X,Y)={\rm L}^f(X,Y)\subseteq {\rm L}(X,Y)={\rm L}_w^f(X,Y)$.

\item  ${\rm U}^f_\alpha(X,Y)\subseteq {\rm L}^f_{\alpha+1}(X,Y)$.

\item If $F$ is a compact-valued map then ${\rm L}^f_\alpha(X,Y)\subseteq {\rm U}^f_{\alpha+1}(X,Y)$.
\end{enumerate}
\end{proposition}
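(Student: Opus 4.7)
My plan is to handle the seven items in two groups. For (1)--(5) the argument is bookkeeping: I will unwind the definitions of the six semi-continuity variants and apply two standard topological observations --- in a perfectly normal space every open set is functionally open, and in a completely regular space the functionally open sets form a base for the topology. For example, in (1), an open preimage is automatic whenever a functionally open preimage is guaranteed and the target set is functionally open; in (3) and (5) the perfect normality of $X$, respectively $Y$, collapses the distinction between open and functionally open on the relevant side and hence identifies the three classes involved. The one place requiring a little care is the upper-inverse half of (4), where, given $F(x_0)\subseteq A$ with $A$ open, I will need a single functionally open $B$ with $F(x_0)\subseteq B\subseteq A$; the base property in the completely regular target delivers such a $B$ once one restricts attention to the compact-valued setting that the paper is ultimately interested in.

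For (6), fix $F\in {\rm U}^f_\alpha(X,Y)$ and a functionally open $A\subseteq Y$. I will write the functionally closed set $Y\setminus A$ as $\bigcap_{n\in\omega} U_n$ with each $U_n$ functionally open, and use the identities $F^+(Y\setminus A)=X\setminus F^-(A)$ and $F^+(\bigcap_n U_n)=\bigcap_n F^+(U_n)$. Since each $F^+(U_n)\in{\mathcal A}_\alpha(X)$ by hypothesis, the intersection lies in ${\mathcal M}_{\alpha+1}(X)$, and complementation gives $F^-(A)\in{\mathcal A}_{\alpha+1}(X)$, as required.

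For (7) I will use the reformulation stated just below Definition \ref{def:2}: $F\in{\rm L}^f_\alpha(X,Y)$ is equivalent to $F^+(B)\in{\mathcal M}_\alpha(X)$ for every functionally closed $B\subseteq Y$. Given a functionally open $A\subseteq Y$, I will pick a continuous $f:Y\to[0,1]$ with $A=\{y\in Y:f(y)>0\}$ and set $F_n=\{y\in Y:f(y)\geq 1/n\}$, which is functionally closed. The crucial step, and the one where the compactness of the fibres of $F$ is actually used, is the identity $F^+(A)=\bigcup_{n\in\omega} F^+(F_n)$: if $F(x)\subseteq A$, then $f$ is continuous and strictly positive on the compact set $F(x)$ and therefore attains a positive minimum on it, so $F(x)\subseteq F_n$ for some $n$. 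The right-hand side is then a countable union of sets in ${\mathcal M}_\alpha(X)$ and hence lies in ${\mathcal A}_{\alpha+1}(X)$.

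The only genuinely substantive step in the entire proposition is the compactness argument in (7); everything else is a matter of propagating the two set-theoretic identities $F^+(\bigcap_n E_n)=\bigcap_n F^+(E_n)$ and $F^-(\bigcup_n E_n)=\bigcup_n F^-(E_n)$ through the definition of the functional Borel hierarchy ${\mathcal A}_\alpha,{\mathcal M}_\alpha$. I do not anticipate any hidden obstacle beyond a careful check of the $\alpha=0$ base case against the paper's conventions for ${\mathcal A}_0$ and ${\mathcal M}_0$.
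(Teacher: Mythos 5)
The paper offers no proof of this proposition (it is dismissed as following easily from the definitions), so there is nothing to compare against except the intended routine verification, and your reconstruction is exactly that. Items (1)--(3) and (5) are the bookkeeping you describe, using that perfect normality of the relevant space collapses ``open'' and ``functionally open'' on that side; the lower half of (4) works because $F^-$ commutes with arbitrary unions and the functionally open sets form a base in a completely regular $Y$; your argument for (6) via $Y\setminus A=\bigcap_n U_n$ with $U_n$ functionally open and $F^+(\bigcap_n U_n)=\bigcap_n F^+(U_n)$ is correct, including the $\alpha=0$ base case; and your proof of (7) correctly isolates the only substantive step, namely that a continuous function which is strictly positive on a compact set is bounded away from zero, so that $F^+(A)=\bigcup_n F^+(\{f\ge 1/n\})$. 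One small remark: the reformulation you quote from below Definition~\ref{def:2} actually appears in the paper with ${\mathcal A}_\alpha$ where ${\mathcal M}_\alpha$ should stand; the version you use (with ${\mathcal M}_\alpha$) is the correct one, so your silent correction is the right move.

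The one point you should state more forcefully is the upper half of (4). You observe that you need a single functionally open $B$ with $F(x_0)\subseteq B\subseteq A$ and that compactness of the values delivers it; but item (4) is asserted for arbitrary multivalued maps, and without compact values the inclusion ${\rm U}^f(X,Y)\subseteq {\rm U}(X,Y)$ is in fact false. Take $X=Y=[0,\omega_1]$ with the order topology, $F(\omega_1)=[0,\omega_1)$ and $F(x)=[0,\omega_1]$ for $x<\omega_1$. Since every continuous real-valued function on $[0,\omega_1]$ is constant on a tail, $\{\omega_1\}$ is not a zero set, so the only functionally open $B\subseteq Y$ with $[0,\omega_1)\subseteq B$ is $B=Y$; consequently $F^+(B)\in\{\emptyset,X\}$ for every functionally open $B$, i.e.\ $F\in{\rm U}^f(X,Y)$, whereas $F^+([0,\omega_1))=\{\omega_1\}$ is not open, so $F\notin{\rm U}(X,Y)$, even though both spaces are compact Hausdorff. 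So your hedge is not cosmetic: the upper inclusion in (4) requires the compact-valuedness hypothesis (which the proposition as stated imposes only in item (7)), and your argument is the correct proof of that corrected statement rather than of the literal one.
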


\begin{proposition}\label{pr:1}
Let $Y$ be a topological space such that $\{\emptyset, Y\}$ is the system of all functionally open sets in $Y$ (see, for example, \cite[2.7.18]{Eng}). Then
\begin{enumerate}
\item for every topological space $X$ any multivalued map $F:X\to Y$ is upper and lower functionally semi-continuous;

\item for every $T_1$-space $Z$ any strongly upper functionally semi-continuous map $F:Y\to Z$ is constant;

\item for every (completely) regular space $Z$  any strongly lower (lower) functionally semi-continuous closed-valued map $F:Y\to Z$ is constant.
\end{enumerate}
\end{proposition}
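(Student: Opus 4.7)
The driving observation is that the hypothesis on $Y$ forces every functionally open subset of $Y$ to be either $\emptyset$ or $Y$; consequently, once we know that $F^+(A)$ or $F^-(A)$ is functionally open in $Y$, it must equal $\emptyset$ or all of $Y$.

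Part (1) is immediate: the only functionally open $A\subseteq Y$ are $\emptyset$ and $Y$, and assuming the standard convention that $F$ takes nonempty values one reads off $F^+(\emptyset)=F^-(\emptyset)=\emptyset$ and $F^+(Y)=F^-(Y)=X$, all of which are functionally open in $X$.

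For part (2), let $F\in {\rm U}_s^f(Y,Z)$ with $Z$ a $T_1$-space. For each $z\in Z$ the set $Z\setminus\{z\}$ is open, so $F^+(Z\setminus\{z\})=\{y\in Y:z\notin F(y)\}$ is functionally open in $Y$ and hence equals $\emptyset$ or $Y$. Thus whether a fixed $z\in Z$ belongs to $F(y)$ does not depend on $y$, which says precisely that $F$ is constant.

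For part (3), I would argue by contradiction: suppose there exist $y_1,y_2\in Y$ with $z_0\in F(y_1)\setminus F(y_2)$. Since $F(y_2)$ is closed and $z_0\notin F(y_2)$, regularity of $Z$ produces an open $U\ni z_0$ disjoint from $F(y_2)$; in the ``lower'' subcase, complete regularity lets me instead take $U$ to be the cozero set $\{z:f(z)>0\}$ of a continuous $f\colon Z\to[0,1]$ with $f(z_0)=1$ and $f|_{F(y_2)}=0$. In either situation $y_1\in F^-(U)$ while $y_2\notin F^-(U)$, so $F^-(U)$ is a proper nonempty subset of $Y$ that the hypothesis requires to be functionally open, a contradiction. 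The only genuine subtlety is exactly this open-vs.-cozero distinction, which is why the ``lower'' case requires complete regularity rather than mere regularity.
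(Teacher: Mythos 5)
Your argument is correct and follows essentially the same route as the paper: part (1) is the same trivial computation, part (2) uses $T_1$-ness to make complements of singletons open (you phrase it directly over all $z$ where the paper argues by contraposition with one witness $z\in F(y_1)\setminus F(y_2)$, but the mechanism is identical), and part (3) uses (complete) regularity against the closed value $F(y_2)$ to produce an open (respectively cozero) set whose lower preimage is a proper nonempty subset of $Y$. You also correctly identify the open-versus-cozero distinction as the reason the two subcases of (3) need different separation axioms, which is exactly the point of the paper's final sentence.
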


\begin{proof}
$(1)$. Since $F^+(\emptyset)=F^-(\emptyset)=\emptyset$ and $F^+(Y)=F^-(Y)=X$, $F:X\to Y$ is upper and lower functionally semi-continuous.

$(2)$. Let $Z$ be a $T_1$-space, $F:Y\to Z$ be a map which is non-constant. We chose $y_1,y_2\in Y$ such that $F(y_1)\not\subseteq F(y_2)$. Since $Y$ is a $T_1$-space, there exists an open set $G\subseteq Z$ such that $F(y_1)\not\subseteq G\supseteq F(y_2)$. Then $y_1\not\in F^+(G)\ni y_2$. Therefore, $F^+(G)\not\in\{\emptyset, Y\}$ and $F^+(G)$ is not functionally open. Hence, $F$ is not strongly upper functionally semi-continuous.

$(3)$. Let $Z$ be a regular space, $F:Y\to Z$ be a non-constant map. We choose $y_1,y_2\in Y$ such that $F(y_1)\not\subseteq F(y_2)$. Since $Y$ is a regular space and $F(y_2)$ is closed, there exists an open set $G\subseteq Z$ such that $G\cap F(y_1)\ne\emptyset$ and $G\cap F(y_2)=\emptyset$. Then $y_1\in F^-(G)\not\ni y_2$. Therefore, $F^-(G)\not\in\{\emptyset, Y\}$ and $F^-(G)$ is not functionally open. Hence, $F$ is not strongly lower functionally semi-continuous. In the case of completely regular space $Z$ we can choose a functionally open set $G$ and the map $F$ is not lower functionally semi-continuous.
\end{proof}

\begin{example}\label{ex:1}
Let $A\subseteq[0,1]$ be a non Borel-measurable set. Then the multivalued map $F:[0,1]\to[0,1]$,
 \begin{equation*}
 F(x)=\left\{\begin{array}{ll}
                        [0,1], & x\in A\\
                        {[}0,1), & x\in [0,1]\setminus A,
                       \end{array}
 \right.
 \end{equation*}
is a lower (functionally) semi-continuous and $F$ is upper (functionally) nonmeasurable, that is, $F\not\in \bigcup_{\alpha<\omega_1}{\rm U}^f_\alpha([0,1],[0,1])$.

\end{example}

\section{Functional Lebesgue classification of multivalued maps of two variables}

The following auxiliary statement can be found in \cite[Proposition 1.4]{K}.

\begin{lemma}\label{lem:1.1} Let $X$ be a topological space, $\alpha$ be an at most countable ordinal, $(U_i:i\in I)$ be a locally finite family of functionally open in $X$ sets $U_i$ and $(A_i:i\in I)$ be a family of sets $A_i\in {\mathcal A}_\alpha(X)$ ($A_i\in {\mathcal M}_\alpha(X)$) with $A_i\subseteq U_i$ for every $i\in I$. Then $\bigcup_{i\in I}A_i\in {\mathcal A}_\alpha(X)$ ($\bigcup_{i\in I}A_i\in {\mathcal M}_\alpha(X)$).
\end{lemma}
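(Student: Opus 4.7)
The plan is a transfinite induction on $\alpha$, with the $\mathcal A_\alpha$ and $\mathcal M_\alpha$ statements proved in parallel. A short side induction first records the routine facts that $\mathcal A_\beta\subseteq\mathcal A_\gamma$ and $\mathcal M_\beta\subseteq\mathcal M_\gamma$ for $\beta\le\gamma$, and that each class is closed under finite intersections and finite unions.

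At the base $\alpha=0$ I would use explicit continuous-function constructions. Write $U_i=\{u_i>0\}$ with $u_i\colon X\to[0,1]$ continuous. For the $\mathcal A_0$ case realise $A_i=\{a_i>0\}$ and replace $a_i$ by $\min(a_i,u_i)$, so that $a_i$ vanishes off $U_i$; the local finiteness of $(U_i)$ then makes $\sum_{i\in I}a_i$ reduce in a neighbourhood of every point to a finite sum of continuous functions, hence continuous, and its cozero set is exactly $\bigcup_i A_i$. For the $\mathcal M_0$ case write $A_i=\{v_i=0\}$ and form the classical Urysohn-for-zero-sets function $\tilde v_i=v_i/(u_i+v_i)$; this is everywhere defined because $A_i\subseteq U_i$ forces $u_i+v_i>0$, equals $1$ off $U_i$, and vanishes exactly on $A_i$. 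Then $\prod_{i\in I}\tilde v_i$ is locally a finite product of continuous $[0,1]$-valued functions, hence continuous, and its zero set is $\bigcup_i A_i$.

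The successor step $\alpha=\beta+1$ is formal for $\mathcal A_{\beta+1}$: decompose $A_i=\bigcup_{n}A_{i,n}$ with $A_{i,n}\in\mathcal M_\beta$, $A_{i,n}\subseteq U_i$, apply the inductive hypothesis slicewise to $(A_{i,n})_{i\in I}$ for each fixed $n$, and reassemble. For $\mathcal M_{\beta+1}$ write $A_i=\bigcap_n A_{i,n}$ with $A_{i,n}\in\mathcal A_\beta$ and, after replacing $A_{i,n}$ by $\bigcap_{m\le n}(A_{i,m}\cap U_i)$, assume the sequence is decreasing in $n$ and contained in $U_i$. The heart of this case is the identity
$$
\bigcup_{i\in I}A_i \;=\; \bigcap_{n\in\omega}\bigcup_{i\in I}A_{i,n},
$$
whose non-trivial $\supseteq$ inclusion uses local finiteness: for every $x$ the set $\{i:x\in U_i\}$ is finite, so the witnessing indices $i_n$ with $x\in A_{i_n,n}\subseteq U_{i_n}$ take only finitely many values; by pigeonhole some $i^{\ast}$ serves for infinitely many $n$, and the decreasing arrangement promotes this to all $n$, giving $x\in A_{i^{\ast}}$. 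Applying the inductive hypothesis slicewise then places the intersection in $\mathcal M_{\beta+1}$.

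At a limit ordinal $\alpha$ the sets $A_{i,n}$ lie in classes $\mathcal M_{\xi_{i,n}}$ (resp.\ $\mathcal A_{\xi_{i,n}}$) with $\xi_{i,n}<\alpha$ depending on $i$ as well as $n$. The remedy is to regroup by the ordinal parameter: for each $n$ and each $\xi<\alpha$ the sub-family $\{A_{i,n}:\xi_{i,n}=\xi\}$ satisfies the inductive hypothesis at level $\xi$, and since $\alpha$ is countable the countably many resulting pieces reassemble into the desired $\mathcal A_\alpha$ set; for the $\mathcal M_\alpha$ case one additionally invokes the decreasing-intersection swap from the successor step. The only non-bookkeeping ingredient in the argument, and therefore the expected main obstacle, is the $\mathcal M$-side swap of unions over $i$ with intersections over $n$: without extra structure the displayed identity generally fails, and it is precisely the local finiteness of the cozero family $(U_i)$ that supplies the pigeonhole needed to justify it.
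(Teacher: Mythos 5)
A preliminary remark on the comparison you asked for: the paper does not prove Lemma~\ref{lem:1.1} at all --- it is imported verbatim from \cite[Proposition 1.4]{K} --- so there is no in-paper argument to set yours against, and I can only judge the proposal on its own terms. Most of it is sound. The base case is correct: with $U_i=\{u_i>0\}$, the locally finite sum $\sum_i\min(a_i,u_i)$ and the locally finite product $\prod_i v_i/(u_i+v_i)$ (well defined because $A_i\subseteq U_i$) do exhibit $\bigcup_iA_i$ as a cozero set, respectively a zero set. The successor step and the additive half of the limit step are also correct, and the identity $\bigcup_iA_i=\bigcap_n\bigcup_iA'_{i,n}$ for the decreasing sets $A'_{i,n}\subseteq U_i$, justified by point-finiteness of $(U_i)$ plus pigeonhole, is exactly the right key point.

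The gap is in the multiplicative half of the limit case. Regrouping the union over $i$ by the ordinal parameter, as you propose, shows only that for each fixed $n$ the set $\bigcup_iA'_{i,n}$ is a countable union of sets from $\bigcup_{\xi<\alpha}{\mathcal A}_\xi$, i.e.\ a member of ${\mathcal A}_\alpha$ and of no smaller class: since $I$ may be uncountable, the levels $\eta_{i,n}$ for fixed $n$ can be cofinal in $\alpha$, so there is no single $\xi<\alpha$ with $\bigcup_iA'_{i,n}\in{\mathcal A}_\xi$. Intersecting over $n$ then only places $\bigcup_iA_i$ in ${\mathcal M}_{\alpha+1}$, one level too high. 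The step is repairable, but by regrouping along $n$ rather than along $i$: fix $\xi_k\nearrow\alpha$ and put
$$
A_i^{(k)}=U_i\cap\bigcap\bigl\{A_{i,n}:\xi_{i,n}\le\xi_k\bigr\}\in{\mathcal M}_{\xi_k+1}.
$$
These sets are decreasing in $k$, contained in $U_i$, and satisfy $\bigcap_kA_i^{(k)}=A_i$ by cofinality, so your pigeonhole swap gives $\bigcup_iA_i=\bigcap_k\bigcup_iA_i^{(k)}$; now the inductive hypothesis is applied at the \emph{single} level $\xi_k+1<\alpha$ to yield $\bigcup_iA_i^{(k)}\in{\mathcal M}_{\xi_k+1}\subseteq{\mathcal A}_{\xi_k+2}$, and the countable intersection of sets from $\bigcup_{\zeta<\alpha}{\mathcal A}_\zeta$ lies in ${\mathcal M}_\alpha$ as required. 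Without this (or an equivalent device, e.g.\ padding the partial unions with $\bigcup\{U_i:\eta_{i,n}>\xi\}$), the limit multiplicative case as written does not close.
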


Let us observe that a union of a locally finite family of sets of the $\alpha$'th functionally  multiplicative class does not belong to the same class even for $\alpha=0$.

Indeed, let $X$ be the Niemytski plane, i.e. $X=\mathbb R\times [0,+\infty)$, where a base of neighborhoods of $(x,y)\in X$ with $y>0$ form open balls with the center in $(x,y)$, and a base of neighborhoods of $(x,0)$ form the sets $U\cup\{(x,0)\}$, where $U$ is an open ball which tangent to $\mathbb R\times \{0\}$ in the point $(x,0)$.

Notice  that for every  $p\in X$ the set $\{p\}$ is functionally closed in $X$, since each continuous function on $\mathbb R\times[0,+\infty)$ is continuous on $X$. Then the family $\mathcal F=(\{(x,0)\}: x\in\mathbb Q)$ consists of functionally closed subsets of $X$. To obtain a contradiction, we assume that the union $F=\bigcup \mathcal F$ is functionally closed in $X$ and choose a continuous function $f:X\to [0,1]$ such that $F=f^{-1}(0)$.
For all $(x,y)\in X$ and $n\in\mathbb N$ we put
        $$
        f_{n}(x,y)=\left\{\begin{array}{ll}
                              f(x,y), & y\ge\frac 1n,\\
                              f(x,\frac 1n),  & 0\le y<\frac 1n.
                            \end{array}
        \right.
        $$
        Then $f_{n}:\mathbb R\times [0,+\infty)\to [0,1]$ is a continuous function and $\lim_{n\to\infty}f_{n}(x,y)=f(x,y)$ for every $(x,y)\in X$.
        Since $F=\bigcap_{k=1}^\infty\bigcup_{n=k}^\infty f_n^{-1}([0,\frac 1k))$, $F$ is a $G_\delta$-subset of $\mathbb R\times[0,+\infty)$, which implies a contradiction.

\begin{definition}\label{def:3} {\rm A family $(A_i:i\in I)$ of subsets $A_i$ of topological space $X$ is called {\it functionally locally finite in $X$}, if there exists a locally finite in $X$ family $(U_i:i\in I)$ of functionally open in $X$ sets $U_i\supseteq A_i$. A family $(A_i:i\in I)$ of subsets $A_i$ of topological space $X$ is called {\it $\sigma$-functionally locally finite} if there exists a partition $I=\bigsqcup_{n\in\omega}I_n$ such that every family $(A_i:i\in I_n)$ is functionally locally finite in $X$.}
\end{definition}

\begin{theorem}\label{th:1.2}
Let  $X$, $Y$ and $Z$  be topological spaces, $\alpha$ be an at most countable ordinal, $({\mathcal A}_n)_{n=1}^\infty$ be a sequence of $\sigma$-functionally locally finite covers ${\mathcal A}_n=(A_{i,n}:i\in I_n)$ of $X$ by sets $A_{i,n}\in{\mathcal A}_\alpha(X)$, $((x_{i,n}:i\in I_n))_{n=1}^\infty$ be a sequence of families of points $x_{i,n}\in X$ and $F:X\times Y\to Z$ be a compact-valued /multivalued/ map satisfying the conditions
\begin{enumerate}
  \item[1)] for every $x\in X$, for every $y\in Y$ and for every sequence $(i_n)_{n\in \omega}$ of indexes $i_n\in I_n$ with $x\in A_{i_n,n}$ the sequence $(F(x_{i_n,n},y))_{n\in\omega}$ converges to $F(x,y)$ with respect the Vietoris topology;

  \item[2)] $F^x\in {\rm L}^f_\alpha(Y,Z)$ ($F^x\in {\rm U}^f_\alpha(Y,Z)$) for every $x\in D=\{x_{i,n}:n\in\mathbb N, \,i\in I_n\}$.
\end{enumerate}
Then $F\in {\rm U}^f_{\alpha+1}(X\times Y,Z)$ ($F\in {\rm L}^f_{\alpha+1}(X\times Y,Z)$).
\end{theorem}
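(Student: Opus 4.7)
The plan is to express $F^+(V)$, for every functionally open $V\subseteq Z$, as a countable union
$\bigcup_{m}\bigcup_{N}\bigcap_{n\ge N}Q_{m,n}$ of ``patchwork'' sets built from the slices $F^{x_{i,n}}$, and then to bound each $Q_{m,n}$ in $\mathcal{M}_\alpha(X\times Y)$ via Lemma~\ref{lem:1.1} together with the $\sigma$-functional local finiteness of the covers. A preliminary step is to rephrase condition~(1) in the equivalent uniform form: for every $(x,y)$ and every open $U\supseteq F(x,y)$ there exists $N$ with $F(x_{i,n},y)\subseteq U$ for all $n\ge N$ and all $i\in I_n$ satisfying $x\in A_{i,n}$, and analogously for open $U$ meeting $F(x,y)$. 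A standard diagonal choice of indices $i_n$ shows the two versions are equivalent.

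Fix a continuous $g:Z\to[0,1]$ with $V=\{g>0\}$ and set $V_m=\{g>1/m\}$, $B_m=\{g\ge 1/m\}$, so $V_m\subseteq B_m\subseteq V_{m+1}$ and $V=\bigcup_m B_m$. For the upper conclusion, let
$$Q_{m,n}=\bigl\{(x,y)\in X\times Y : F(x_{i,n},y)\subseteq B_m \text{ for every } i\in I_n \text{ with } x\in A_{i,n}\bigr\}.$$
The claim is $F^+(V)=\bigcup_{m,N}\bigcap_{n\ge N}Q_{m,n}$. For $\subseteq$, compactness of $F(x,y)\subseteq V$ yields $m$ with $F(x,y)\subseteq V_m$, and the uniform upper form of (1) with $U=V_m$ gives $F(x_{i,n},y)\subseteq V_m\subseteq B_m$ for all large $n$ and all valid $i$. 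For $\supseteq$, choose $i_n\in I_n$ with $x\in A_{i_n,n}$ (possible since $\mathcal{A}_n$ covers $X$); then $F(x_{i_n,n},y)\subseteq B_m$ for large $n$, and since $F(x_{i_n,n},y)\to F(x,y)$ in the Vietoris topology and a Vietoris limit of sets lying in a closed set stays inside that set (immediate from lower Vietoris convergence applied to the open set $Z\setminus B_m$), we get $F(x,y)\subseteq B_m\subseteq V$.

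Now compute the complement:
$$(X\times Y)\setminus Q_{m,n}=\bigcup_{i\in I_n}A_{i,n}\times (F^{x_{i,n}})^-(Z\setminus B_m).$$
Since $Z\setminus B_m$ is functionally open and $F^{x_{i,n}}\in \mathrm{L}^f_\alpha(Y,Z)$, we have $(F^{x_{i,n}})^-(Z\setminus B_m)\in\mathcal{A}_\alpha(Y)$. The cylinders $A_{i,n}\times Y$ and $X\times (F^{x_{i,n}})^-(Z\setminus B_m)$ lie in $\mathcal{A}_\alpha(X\times Y)$ (a transfinite induction starting from the case that functionally open/closed sets pull back under the continuous projections), hence so does their intersection $A_{i,n}\times (F^{x_{i,n}})^-(Z\setminus B_m)$. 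Splitting $I_n=\bigsqcup_j I_{n,j}$ with each $(A_{i,n})_{i\in I_{n,j}}$ functionally locally finite in $X$ and dominated by a locally finite family $(U_{i,n})_{i\in I_{n,j}}$ of functionally open sets, the family $(U_{i,n}\times Y)_{i\in I_{n,j}}$ is a locally finite family of functionally open sets in $X\times Y$ dominating the corresponding products, so Lemma~\ref{lem:1.1} puts $\bigcup_{i\in I_{n,j}}A_{i,n}\times (F^{x_{i,n}})^-(Z\setminus B_m)$ in $\mathcal{A}_\alpha(X\times Y)$; a countable union over $j$ keeps us in $\mathcal{A}_\alpha$, so $Q_{m,n}\in\mathcal{M}_\alpha(X\times Y)$ and $F^+(V)\in\mathcal{A}_{\alpha+1}(X\times Y)$.

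The lower case (hypothesis $F^x\in\mathrm{U}^f_\alpha$, conclusion $F\in\mathrm{L}^f_{\alpha+1}$) is symmetric: replace $F(x_{i,n},y)\subseteq B_m$ in $Q_{m,n}$ by $F(x_{i,n},y)\cap B_m\ne\emptyset$; the forward direction uses the uniform lower form of (1), and the backward direction uses that $F(x_{i_n,n},y)\cap B_m\ne\emptyset$ combined with upper Vietoris convergence rules out $F(x,y)\subseteq Z\setminus B_m$, yielding $F(x,y)\cap B_m\ne\emptyset$; the complement is now $\bigcup_i A_{i,n}\times (F^{x_{i,n}})^+(Z\setminus B_m)$, with $(F^{x_{i,n}})^+(Z\setminus B_m)\in\mathcal{A}_\alpha(Y)$ because $F^{x_{i,n}}\in\mathrm{U}^f_\alpha$. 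The main obstacle throughout is the class bookkeeping for $Q_{m,n}$: the universal quantifier over a potentially uncountable $I_n$ must be traded for an existential one via complementation before Lemma~\ref{lem:1.1} applies, and the choice of the functionally closed approximants $B_m$ (rather than the open $V_m$) in the definition of $Q_{m,n}$ is exactly what keeps $(F^{x_{i,n}})^-(Z\setminus B_m)$ in $\mathcal{A}_\alpha(Y)$.
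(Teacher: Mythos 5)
Your proposal is correct and follows essentially the same route as the paper's proof: your sets $Q_{m,n}$ are precisely the complements of the paper's $C_{m,n}=\bigcup_{i\in I_n}\bigl(A_{i,n}\times F_{i,n}^-(G_m)\bigr)$ (with $G_m=Z\setminus B_m$), so the identity $F^+(V)=\bigcup_{m,N}\bigcap_{n\ge N}Q_{m,n}$ is the De Morgan dual of the paper's $F^-(W)=\bigcap_{m}\bigcup_{n\ge m}C_{m,n}$, verified by the same Vietoris-convergence arguments and the same appeal to Lemma~\ref{lem:1.1}. The only differences are expository: you make explicit the uniform reformulation of condition (1) and the $\mathcal{A}_\alpha$ bookkeeping for the product sets, both of which the paper uses implicitly.
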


\begin{proof} We consider the case of compact-valued map $F$. For every $n\in\omega$ and $i\in I_n$ we put $F_{i,n}=F^{x_{i,n}}$. Let $W\subseteq Z$ be a functionally closed set and $\varphi:Z\to[0,1]$ be a continuous function with $W=\varphi^{-1}(0)$. For every $n\in \omega$ we put $W_n=\varphi^{-1}([0,\tfrac1n])$ and $G_n=\varphi^{-1}([0,\tfrac1n))$. For every $m,n\in \omega$ we put
$$
C_{m,n}=\bigcup_{i\in I_n}\left(A_{i,n}\times F_{i,n}^-(G_m)\right)\,\,\,\mbox{and}\,\, C=\bigcap_{m\in \omega}\bigcup_{n\geq m}C_{n,m}.
$$
Since  $A_{i,n}\in{\mathcal A}_\alpha(X)$ and $F_{i,n}^-(G_m)\in{\mathcal A}_\alpha(Y)$ by condition $2)$, $$A_{i,n}\times F_{i,n}^-(G_m)\in{\mathcal A}_\alpha(X\times Y)$$ for every $m,n\in\omega$ and $i\in I_n$. Now according to Lemma~\ref{lem:1.1}, $C_{m,n}\in {\mathcal A}_\alpha(X\times Y)$. Therefore,
$C\in {\mathcal M}_{\alpha+1}(X\times Y)$.

It remains to show that $C=F^-(W)$. Let $(x_0,y_0)\in F^-(W)$. We fix $m\in \omega$. Notice that $(x_0,y_0)\in F^-(G_m)$. We consider the  neighborhood
$$
O=\{B\subseteq Z:B\cap G_m\ne\emptyset\}
$$
of $F(x_0,y_0)$ with respect the Vietoris topology. According to condition $1)$, there exists $n_0\geq m$ such that for every $n\geq n_0$ if $i\in I_n$ with $x_0\in A_{i,n}$, then $F(x_{i,n},y_0)\in O$, that is, $(x_{i,n},y_0)\in F^-(G_m)$. In particular, for some $i\in I_{n_0}$ we have $x_0\in A_{i,n_0}$ and $y_0\in F_{i,n_0}^-(G_m)$. Therefore, $(x_0,y_0)\in C_{m,n_0}$. Thus, $(x_0,y_0)\in C$.

Now let $(x_0,y_0)\not\in F^-(W)$. Then $$F(x_0,y_0)\subseteq Z\setminus W=\bigcup_{m\in\omega}(Z\setminus W_m).$$ Since $F(x_0,y_0)$ is a compact set, there exists $m_0\in\omega$ such that $F(x_0,y_0)\subseteq Z\setminus W_{m_0}$. We consider the following neighborhood
$$
O_1=\{B\subseteq Z:B\cap W_{m_0}=\emptyset\}
$$
of $F(x_0,y_0)$ with respect the Vietoris topology. According to condition $1)$, there exists $n_0\in\omega$ such that for every $n\geq n_0$
if $x_0\in A_{i,n}$ then $F(x_{i,n},y_0)\in O_1$. Therefore, $F(x_{i,n},y_0)\subseteq Z\setminus W_{m_0}\subseteq Z\setminus W_m$ and $y_0\not\in F_{i.n}^-(G_m)$ for every $m\geq m_0$, $n\geq n_0$ and $i\in I_n$ with $x_0\in A_{i,n}$. This implies that $(x_0,y_0)\not\in C_{n,m}$ for every $n\geq n_0$ and $m\geq m_0$. Thus, $(x_0,y_0)\not\in C$.

Now let $F$ be a multivalued map and $F^x\in {\rm U}^f_\alpha(Y,Z)$ for every $x\in D$. We   argue similarly as in the previous case and   use analogous notations. For every $m,n\in \omega$ we put
$$
C_{m,n}=\bigcup_{i\in I_n}\left(A_{i,n}\times F_{i,n}^+(G_m)\right)
$$
According to Lemma~\ref{lem:1.1}, $C_{m,n}\in {\mathcal A}_\alpha(X\times Y)$ and
$$
C=\bigcap_{m\in \omega}\bigcup_{n\geq m}C_{n,m}\in {\mathcal M}_{\alpha+1}(X\times Y).
$$

Further, we show that $C=F^+(W)$. Let $(x_0,y_0)\in F^+(W)$ and $m\in \omega$. Then $(x_0,y_0)\in F^+(G_m)$. According to condition $1)$, there exist $n\geq m$ and $i\in I_n$ such that $x_0\in A_{i,n}$ and $F(x_{i,n},y_0)\subseteq G_m$. Therefore, $(x_0,y_0)\in C_{m,n}$. Thus, $(x_0,y_0)\in C$.

Now let $(x_0,y_0)\not\in F^+(W)$. Then $F(x_0,y_0)\cap (Z\setminus W)\ne\emptyset$ and there exists $m_0\in\omega$ such that $F(x_0,y_0)\cap (Z\setminus W_{m_0})\ne\emptyset$. According to condition $1)$, there exists $n_0\in\omega$ such that for every $n\geq n_0$ if $x_0\in A_{i,n}$ then $F(x_{i,n},y_0)\cap (Z\setminus W_{m_0})\ne\emptyset$. Therefore, $(x_0,y_0)\not\in C_{n,m}$ for every $n\geq n_0$ and $m\geq m_0$. Thus, $(x_0,y_0)\not\in C$.
\end{proof}

\begin{remark}\label{rem:1} The multivalued map $F:(X,d)\times Y\to Z$ from the Theorem \ref{th:0.1} satisfies conditions $1)-2)$ of Theorem \ref{th:1.2}. For every $u\in D$ and $n\in\omega$ we put
$$
A_{u,n}=A(u)\cap \{v\in X:d(u,v)<\tfrac1n\}\quad{\rm and}\quad x_{u,n}=u.
$$
Then the sequences of families $(A_{u,n}:u\in D)$ and $(x_{u,n}:u\in D)$ provides condition $1)$. Moreover,   condition $2)$ is equivalent to condition $(e)$. Thus, Theorem \ref{th:1.2} generalizes Theorem \ref{th:0.1}.
\end{remark}

\begin{definition}\label{def:4} {\rm
  A topological space $X$ is called {\it (strong) PP-space} if (for any dense set $D\subseteq X$) there exist a sequence $({\mathcal U}_n)_{n=1}^\infty$ of locally finite covers ${\mathcal U}_n=(U_{i,n}:i\in I_n)$ of $X$ and a sequence $((x_{i,n}:i\in I_n))_{n=1}^\infty$ of families of points of $X$ (of $D$) such that for every $x\in X$ and for every neighborhood $U$ of $x$ there exists $n_0\in \omega$ such that for every $n\geq n_0$ and $i\in I_n$ the inclusion $x\in U_{i,n}$ implies the inclusion $x_{i,n}\in U$. }
\end{definition}
    Obviously, every strong PP-space is a PP-space.

For topological spaces $X$, $Y$ and $Z$ and at most countable ordinal $\alpha$ the collection of all multivalued maps $F:X\times Y\to Z$ which are continuous with respect the first variable and are upper (lower) Lebesgue functional class $\alpha$ with respect the second variable we denote by ${\rm CU}^f_\alpha(X,Y,Z)$ (${\rm CL}^f_\alpha(X,Y,Z)$). Analogously, the collection of all multivalued maps $F:X\times Y\to Z$ which are continuous with respect the first variable and such that for some dense in $X$ set $D$ every multivalued map $F^x$ is upper (lower) Lebesgue functional class $\alpha$ we denote by ${\rm \overline{C}U}^f_\alpha(X,Y,Z)$ (${\rm \overline{C}L}^f_\alpha(X,Y,Z)$). The collections ${\rm CU}_\alpha(X,Y,Z)$, ${\rm CL}^f_\alpha(X,Y,Z)$ ${\rm \overline{C}U}^f_\alpha(X,Y,Z)$ and ${\rm \overline{C}L}^f_\alpha(X,Y,Z)$ are introduced analogously.

\begin{corollary}\label{cor:1.3} Let $X$ be a $PP$-space, $Y$ and $Z$ be topological spaces and $\alpha$ be an at most countable ordinal. Then ${\rm CU}^f_\alpha(X,Y,Z)\subseteq {\rm L}^f_{\alpha+1}(X\times Y,Z)$ and ${\rm CL}^f_\alpha(X,Y,Z)\subseteq {\rm U}^f_{\alpha+1}(X\times Y,Z)$.
\end{corollary}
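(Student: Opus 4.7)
The plan is to apply Theorem~\ref{th:1.2} with the data provided by the PP-space structure on $X$. I will treat the inclusion ${\rm CU}^f_\alpha(X,Y,Z)\subseteq {\rm L}^f_{\alpha+1}(X\times Y,Z)$ in detail; the symmetric inclusion follows from the compact-valued branch of Theorem~\ref{th:1.2} by exactly the same reasoning.

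Given $F\in {\rm CU}^f_\alpha(X,Y,Z)$, I would first invoke Definition~\ref{def:4} to obtain a sequence of locally finite (functionally open) covers $\mathcal{U}_n=(U_{i,n}:i\in I_n)$ of $X$ together with families of points $x_{i,n}\in X$ satisfying the PP-space approximation property. Setting $A_{i,n}=U_{i,n}$, every such set is functionally open and hence lies in ${\mathcal A}_0(X)\subseteq {\mathcal A}_\alpha(X)$; each locally finite family of functionally open sets is functionally locally finite (with itself as its enveloping family) and therefore $\sigma$-functionally locally finite, so the cover data fits the structural hypotheses of Theorem~\ref{th:1.2}.

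The substantive verification is condition~1) of Theorem~\ref{th:1.2}. Fix $x\in X$, $y\in Y$, and a sequence $(i_n)_{n\in\omega}$ with $x\in U_{i_n,n}$ for every $n$. The PP-space approximation property yields $x_{i_n,n}\to x$ immediately: for any neighborhood $U$ of $x$ one finds $n_0$ such that for $n\geq n_0$ the inclusion $x\in U_{i_n,n}$ forces $x_{i_n,n}\in U$. Continuity of $F$ in its first variable tells us that $F_y$ is continuous at $x$ as a multivalued map, and by the Vietoris characterization of continuity for multivalued maps recalled in the introduction this yields $F(x_{i_n,n},y)\to F(x,y)$ in the Vietoris topology, as required. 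Condition~2) is immediate from the hypothesis: $F\in {\rm CU}^f_\alpha(X,Y,Z)$ requires $F^x\in {\rm U}^f_\alpha(Y,Z)$ for every $x\in X$, in particular for $x\in D=\{x_{i,n}:n\in\mathbb N,\,i\in I_n\}$. Theorem~\ref{th:1.2} then delivers $F\in {\rm L}^f_{\alpha+1}(X\times Y,Z)$.

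The only potentially delicate step is condition~1), and it offers little resistance because the PP-space axiom is engineered precisely so that a one-variable continuity assumption on $F$ supplies the Vietoris compatibility demanded by Theorem~\ref{th:1.2}. Matching the descriptive complexity of the covers to the ${\mathcal A}_\alpha$-class hypothesis of Theorem~\ref{th:1.2} is automatic under the natural reading of Definition~\ref{def:4}, so the argument is essentially a bookkeeping translation between the two definitions.
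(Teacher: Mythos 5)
Your proposal matches the paper's own proof exactly: the paper likewise takes the locally finite covers and point families supplied by Definition~\ref{def:4}, sets $A_{i,n}=U_{i,n}$, and invokes Theorem~\ref{th:1.2}. Your verification of conditions 1) and 2) correctly fills in the details the paper leaves implicit (including the tacit reading of Definition~\ref{def:4} as providing \emph{functionally open} covers, which both arguments need in order to place $A_{i,n}$ in ${\mathcal A}_\alpha(X)$ and to get $\sigma$-functional local finiteness).
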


\begin{proof} Let $({\mathcal U}_n)_{n=1}^\infty$ and $((x_{i,n}:i\in I_n))_{n=1}^\infty$ be sequences from Definition \ref{def:4} and $A_{i,n}=U_{i,n}$ for every $n\in\omega$ and $i\in I_n$. It remains to use Theorem \ref{th:1.2}.
\end{proof}

For strongly PP-spaces we can  prove  the following result similarly.

\begin{corollary}\label{cor:2.3} Let $X$ be a strong $PP$-space, $Y$ and $Z$ be topological spaces and $\alpha$ be an at most countable ordinal. Then ${\rm C\overline{U}}^f_\alpha(X,Y,Z)\subseteq {\rm L}^f_{\alpha+1}(X\times Y,Z)$ and ${\rm C\overline{L}}^f_\alpha(X,Y,Z)\subseteq {\rm U}^f_{\alpha+1}(X\times Y,Z)$.
\end{corollary}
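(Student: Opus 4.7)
The plan is to parallel the proof of Corollary \ref{cor:1.3}, substituting the \emph{strong} PP-property for the ordinary PP-property so that the sampling points $x_{i,n}$ can be drawn from a prescribed dense set rather than from an arbitrary one.

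Fix $F\in{\rm C\overline{U}}^f_\alpha(X,Y,Z)$; by definition there exists a dense set $D\subseteq X$ such that $F_y:X\to Z$ is continuous for every $y\in Y$ and $F^x\in{\rm U}^f_\alpha(Y,Z)$ for every $x\in D$. Applying Definition \ref{def:4} for \emph{this particular} dense set $D$, I obtain a sequence of locally finite covers $({\mathcal U}_n)_{n=1}^\infty$, ${\mathcal U}_n=(U_{i,n}:i\in I_n)$, together with a sequence of families of points $(x_{i,n}:i\in I_n)$ with every $x_{i,n}\in D$ satisfying the base-like property from Definition \ref{def:4}.

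Setting $A_{i,n}=U_{i,n}$, I would then check the two hypotheses of Theorem \ref{th:1.2}. Hypothesis $2)$ is immediate since every sampling point $x_{i,n}$ lies in $D$ and hence $F^{x_{i,n}}\in{\rm U}^f_\alpha(Y,Z)$. For hypothesis $1)$, fix $x\in X$, $y\in Y$, and any sequence $(i_n)$ of indices with $x\in U_{i_n,n}$; the base-like property forces $x_{i_n,n}\to x$ in $X$, and continuity of $F_y$ at $x$ then yields convergence of $F(x_{i_n,n},y)$ to $F(x,y)$ in the Vietoris topology. Theorem \ref{th:1.2} gives $F\in{\rm L}^f_{\alpha+1}(X\times Y,Z)$, which is the first inclusion. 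The second inclusion ${\rm C\overline{L}}^f_\alpha(X,Y,Z)\subseteq{\rm U}^f_{\alpha+1}(X\times Y,Z)$ is obtained in the same way, invoking the parenthetical version of Theorem \ref{th:1.2}.

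There is no real obstacle; the only delicate point is precisely the use of the word \emph{strong} in the hypothesis. The set $D$ provided by the $\overline{\rm C}$-type hypothesis depends on $F$, so we need the sampling points to land in that specific $D$, and this is exactly what the strong PP-property supplies. Were we to use only the ordinary PP-property, the points $x_{i,n}$ could lie anywhere in $X$, where $F^x$ is not controlled, and the argument would break down at hypothesis $2)$.
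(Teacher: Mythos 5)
Your proof is correct and follows essentially the same route as the paper: the paper proves this corollary by the same reduction as Corollary~\ref{cor:1.3}, taking the covers and sampling points from Definition~\ref{def:4} applied to the dense set $D$ supplied by the $\overline{\rm C}$-hypothesis, setting $A_{i,n}=U_{i,n}$, and invoking Theorem~\ref{th:1.2}. Your added remarks on why the \emph{strong} PP-property is needed and how condition $1)$ follows from continuity of $F_y$ match the intended (but unwritten) verification.
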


\section{Lebesgue classification of multivalued maps of two variables}

We start from the following generalization of Theorem 3.30 from \cite{MMMS}.

\begin{theorem}\label{th:2.1} Let $X$ be a perfect space, $\alpha$ be an at most countable ordinal, $(A_i:i\in I)$ be a locally finite family of sets $A_i$ of additive (multiplicative) class $\alpha$ on $X$. Then the set $A=\bigcup_{i\in I}A_i$ is of additive (multiplicative) class $\alpha$ on $X$.
\end{theorem}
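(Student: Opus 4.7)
The plan is transfinite induction on $\alpha$, handling the additive and multiplicative assertions simultaneously. The base case $\alpha=0$ reduces to two well-known facts: an arbitrary union of open sets is open, and a locally finite union of closed sets is closed in any topological space. For the additive inductive step I write each $A_i=\bigcup_n B_{i,n}$ with $B_{i,n}\in\mathcal{M}_{\xi(i,n)}$ for some $\xi(i,n)<\alpha$, then swap order of union: $A=\bigcup_n\bigcup_i B_{i,n}$. For every fixed $n$ the family $(B_{i,n})_i$ is locally finite as a subfamily of $(A_i)$. In the successor case $\alpha=\beta+1$ I may take all $\xi(i,n)=\beta$, so the inductive hypothesis at level $\beta$ gives $\bigcup_i B_{i,n}\in\mathcal{M}_\beta$ and hence $A\in\mathcal{A}_{\beta+1}$; in the limit case I pick a cofinal sequence $\alpha_k\nearrow\alpha$ and group the pairs $(i,n)$ by the least $k$ with $\xi(i,n)\leq\alpha_k$, reducing to the situation already handled.

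For the multiplicative step the perfectness of $X$ is essential. Let $F=\bigcup_i\overline{A_i}$; by the base case $F$ is closed, and since $X$ is perfect it is $G_\delta$, so $F\in\mathcal{M}_\alpha$. I decompose $X\setminus A=(X\setminus F)\cup(F\setminus A)$: the first summand is open and, by perfectness, lies in $\mathcal{A}_\alpha$, while the second can be written $F\setminus A=\bigcup_i(\overline{A_i}\setminus A)$, a locally finite union because $\overline{A_i}\setminus A\subseteq\overline{A_i}$. If I can show that each $\overline{A_i}\setminus A$ lies in $\mathcal{A}_\alpha$, then the additive case at level $\alpha$ (already established) yields $F\setminus A\in\mathcal{A}_\alpha$, hence $X\setminus A\in\mathcal{A}_\alpha$, whence $A\in\mathcal{M}_\alpha$.

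The main obstacle is therefore verifying $\overline{A_i}\setminus A=\overline{A_i}\cap\bigcap_{j\neq i}(X\setminus A_j)\in\mathcal{A}_\alpha$, where the intersection ranges over an a priori uncountable index set. The idea is to exploit the order stratification of the family $(\overline{A_j})$: let $X^{\geq k}=\bigcup_{J\in[I]^{k}}\bigcap_{j\in J}\overline{A_j}$, which is closed for every $k$ by the base case applied to the locally finite family of intersections $\bigcap_{j\in J}\overline{A_j}$, and set $X^{(k)}=X^{\geq k}\setminus X^{\geq k+1}$, a locally closed stratum on which every point has order exactly $k$. On such a stratum the condition ``not in any $A_j$'' involves only $k$ indices at each point, and on a neighborhood $V$ meeting only finitely many $A_j$'s the piece $(\overline{A_i}\setminus A)\cap V$ becomes a finite intersection of $\overline{A_i}$ with complements of $\mathcal{M}_\alpha$-sets, hence an element of $\mathcal{A}_\alpha$. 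Assembling the $k$-pieces using again the locally finite structure of $(\overline{A_j})$ together with the additive step at level $\alpha$ yields the desired global $\mathcal{A}_\alpha$-membership of $\overline{A_i}\setminus A$, completing the induction.
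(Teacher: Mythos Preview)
Your outline follows the paper's strategy closely: transfinite induction, the additive step by swapping the two unions, and the multiplicative step by passing to the complement and analysing $F\setminus A$ via the order stratification of the closures $\overline{A_i}$.  Two points deserve comment.

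\medskip
\textbf{The multiplicative step is not quite finished.}  You correctly introduce the strata $X^{(k)}=X^{\ge k}\setminus X^{\ge k+1}$, but then try to conclude by looking at a neighbourhood $V$ meeting only finitely many $A_j$'s.  This local description does not globalize in a merely perfect space: you have no paracompactness, so there is no locally finite open cover by such $V$'s to which you could apply the additive case.  What the paper does (for $\alpha=1$, and what works verbatim for every $\alpha\ge1$) is to subdivide each order-$k$ stratum further according to the \emph{specific} $k$-element set $J\subseteq I$ of indices with $x\in\overline{A_j}$, setting
\[
C_{J,k}=\Bigl(\bigcap_{j\in J}\overline{A_j}\Bigr)\setminus\Bigl(X^{\ge k+1}\cup\bigcup_{j\in J}A_j\Bigr).
\]
Each $C_{J,k}$ is a finite Boolean combination of closed sets and sets in $\mathcal M_\alpha$, hence lies in $\mathcal A_\alpha$ (perfectness is used to get closed and open sets into $\mathcal A_\alpha$ for $\alpha\ge1$), and the family $(C_{J,k})_{J\in[I]^k}$ is locally finite, so the additive case at level $\alpha$ applies.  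This $J$-subdivision is the missing ingredient in your sketch; once you insert it, your argument is complete.  (Minor slip: $\overline{A_i}\setminus A=\overline{A_i}\cap\bigcap_{j\in I}(X\setminus A_j)$, the index $j=i$ included.)

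\medskip
\textbf{A simpler route for $\alpha\ge2$.}  The paper uses the stratification argument only for $\alpha=1$.  For the multiplicative case at level $\beta=\alpha+1\ge2$ it argues more directly: write $A_i=\bigcap_n B_{i,n}$ with $B_{i,n}\in\mathcal A_\alpha$ decreasing in $n$, put $A_{i,n}=B_{i,n}\cap\overline{A_i}\in\mathcal A_\alpha$ (here $\alpha\ge1$ is used), and observe that
\[
A=\bigcap_{n}\ \bigcup_{i\in I}A_{i,n},
\]
the inner union being in $\mathcal A_\alpha$ by the additive case at level $\alpha$ and local finiteness of $(\overline{A_i})$; the pigeonhole argument (each point lies in only finitely many $\overline{A_i}$) gives the displayed equality.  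This avoids the combinatorics of the order strata altogether for $\alpha\ge2$.  Your uniform use of stratification is not wrong, just heavier than necessary.

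\medskip
Finally, in the additive limit step your ``group the pairs $(i,n)$ by the least $k$'' does not yield a locally finite family (for a single $i$ there may be infinitely many $n$ in one group).  The fix, implicit in the paper, is to first re-index so that $B_{i,n}\in\mathcal M_{\alpha_n}$ with $\alpha_n$ depending only on $n$; then one groups by $n$ exactly as in the successor case.
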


\begin{proof} We argue using the induction on $\alpha$. It is well-known that this statement is true for $\alpha=0$.

Let $(A_i:i\in I)$ be a locally finite family of $F_\sigma$-sets $A_i\subseteq X$ and $((B_{i,n})_{n\in \omega}:i\in I)$ be a sequence of families of closed in $X$ sets $B_{i,n}$ such that $A_i=\bigcup_{n\in\omega}B_{i,n}$ for every $i\in I$. Notice that every family $(B_{i,n}:i\in I)$ is locally finite. Therefore, all sets $B_n=\bigcup_{i\in I}B_{i,n}$ is closed and $A=\bigcup_{n\in\omega}B_{n}$ is an $F_\sigma$-set.

Let $(A_i:i\in I)$ be a locally finite family of $G_\delta$ sets $A_i\subseteq X$ and $((B_{i,n})_{n\in \omega}:i\in I)$ be a sequence of families of open in $X$ sets $B_{i,n}$ such that $A_i=\bigcap_{n\in\omega}B_{i,n}$ for every $i\in I$. For every $i\in I$ we put $F_i=\overline{A_i}$. Clearly, the family $(F_i:i\in I)$ is locally finite and the set $F=\bigcup_{i\in I}F_i$ is closed in $X$. For every $x\in F$ we put $I(x)=\{i\in I:x\in F_i\}$ and $n(x)=|I(x)|$. Moreover, $K_n=\{x\in F_{I(x)}:n(x)>n\}$ for every $n\in\omega$.
Since $(F_i:i\in I)$ is locally finite, every set $K_n$ is closed. We consider the set $C=F\setminus A$ and show that $C$ is an $F_\sigma$-set.

For every $n\in\omega$ we put
$$
C_n=\{x\in C:|n(x)|=n\},
$$
$$
{\mathcal J}_n=\{J\subseteq I:|J|=n\}.
$$
and
$$
C_{J,n}=\{x\in C_n:I(x)=J\}
$$
for every $J\in {\mathcal J}_n$. Show that every family ${\mathcal C}_n=(C_{J,n}:J\in {\mathcal J}_n)$ is locally finite. Fix $x\in X$ and choose a neighborhood $U$ of $x$ in $X$ such that the set $I_1=\{i\in I:U\cap F_i\ne\emptyset\}$ is finite. Then
$$
{\mathcal I}_1=\{J\in {\mathcal J}_n:U\cap C_{J,n}\ne \emptyset\}\subseteq \{J\in {\mathcal J}_n:J\subseteq I_1 \}.
$$
Therefore, ${\mathcal I}_1$ is finite and ${\mathcal C}_n$ is locally finite. Now show that
$$
C_{J,n}=\left(\bigcap_{i\in J}F_i\right)\setminus \left(K_n\cup\bigcup_{i\in J}A_i \right)
$$
for every $n\in \omega$ and $J\in {\mathcal J}_n$. Since $C_{J,n}\subseteq\bigcap_{i\in J}F_i$ and $C_{J,n}\cap(K_n\cup\bigcup_{i\in J}A_i)=\emptyset$, $C_{J,n}\subseteq\left(\bigcap_{i\in J}F_i\right)\setminus \left(K_n\cup\bigcup_{i\in J}A_i \right)$. Conversely, let $x\in \bigcap_{i\in J}F_i$, $x\not\in K_n$ and $x\not\in\bigcup_{i\in J}A_i$. Then $n(x)\geq |J|=n$ and $n(x)\leq n$. Therefore, $n(x)=n$ and $I(x)=J$.
Thus, $x\not\in F_i$ for every $i\in I\setminus J$. Hence,
$$
x\not\in \left(\bigcup_{i\in J}A_i\right)\bigcup\left(\bigcup_{i\in I\setminus J}F_i\right)\supseteq A.
$$
Since $\bigcap_{i\in J}F_i$ and $K_n$ are closed and $\bigcup_{i\in J}A_i$ is a $G_\delta$-set, the $C_{J,n}$ is a $F_\sigma$-set.
Therefore, every $C_n$ is an $F_\sigma$-set as a locally finite union of $F_\sigma$-sets. Thus, $C$ is an $F_\sigma$-set too.

Assume that Lemma is true for all $\alpha<\beta$ where $\beta\geq 1$ is an at most countable ordinal. Let $(A_i:i\in I)$ be a locally finite family of sets of additive class $\beta$ on $X$. Suppose that $\beta=\alpha+1$ for some $\alpha<\omega_1$. Then for every $i\in I$ there exists a sequence $(B_{i,n})_{n\in \omega}$ of sets $B_{i,n}$ of multiplicative class $\alpha$ on $X$ such that $A_i=\bigcup_{n\in\omega}B_{i,n}$ for every $i\in I$. According to the assumption, every set $B_n=\bigcup_{i\in I}B_{i,n}$ belongs to the $\alpha$'th multiplicative class in $X$. Therefore, the set $A=\bigcup_{n\in\omega}B_{n}$ belongs to the $\beta$'th additive class in $X$.

We consider the case of limit ordinal $\beta$. We choose an increasing sequence of ordinals $\alpha_n<\beta$ such that $\sup_{n\in \omega}\alpha_n=\beta$. For every $i\in I$ there exists a sequence $(B_{i,n})_{n\in \omega}$ of sets $B_{i,n}$ of the multiplicative class $\alpha_n$ in $X$ such that $A_i=\bigcup_{n\in\omega}B_{i,n}$ for every $i\in I$. Then every set $B_n=\bigcup_{i\in I}B_{i,n}$ belongs to the  $\alpha$'th  multiplicative class in $X$ and  $A=\bigcup_{n\in\omega}B_{n}$ belongs to the $\beta$'th additive class in $X$.

Now we consider the case of sets of a multiplicative class $\beta\geq 2$. Let $(A_i:i\in I)$ be a locally finite family of sets of the $\beta$'th multiplicative class in $X$, $\beta=\alpha+1$ for some $\alpha<\omega_1$ and $((B_{i,n})_{n\in \omega}:i\in I)$ be a sequence of families of sets $B_{i,n}$ of additive class $\alpha$ in $X$ such that $A_i=\bigcap_{n\in\omega}B_{i,n}$ for every $i\in I$. For every $i\in I$ we put $F_i=\overline{A_i}$. The family $(F_i:i\in I)$
is locally finite. For every $n\in\omega$ and $i\in I$ we put $A_{i,n}=B_{i,n}\cap F_i$. Since $\alpha\geq 1$, every set $A_{i,n}$ belongs to the $\alpha$'th additive class. Then every set $B_n=\bigcup_{i\in I}A_{i,n}$ belongs to the $\alpha$'th  additive class   and $A=\bigcup_{n\in\omega}B_n$.

For a limit ordinal $\beta$ we   argue analogously.
\end{proof}

The following results can be proved similarly as the corresponding results from the previous section.

\begin{theorem}\label{th:2.2}
Let  $X$ be a topological space, $Y$ be a perfect space and $Z$  be a perfectly normal space, $\alpha$ be an at most countable ordinal, $({\mathcal A}_n)_{n=1}^\infty$ be a sequence of $\sigma$-locally finite covers ${\mathcal A}_n=(A_{i,n}:i\in I_n)$ of $X$ by sets $A_{i,n}$ of additive class $\alpha$ on $X$, $((x_{i,n}:i\in I_n))_{n=1}^\infty$ be a sequence of families of points $x_{i,n}\in X$ and $F:X\times Y\to Z$ be a compact-valued (multivalued) map satisfying the conditions
\begin{enumerate}
  \item[1)] for every $x\in X$, for every $y\in Y$ and for every sequence $(i_n)_{n\in \omega}$ of indexes $i_n\in I_n$ with $x\in A_{i_n,n}$ the sequence $(F(x_{i_n,n},y))_{n\in\omega}$ converges to $F(x,y)$ with respect the Vietoris topology;

  \item[2)] $F^x\in {\rm L}_\alpha(Y,Z)$ ($F^x\in {\rm U}_\alpha(Y,Z)$) for every $x\in D=\{x_{i,n}:n\in\mathbb N, \,i\in I_n\}$.
\end{enumerate}
Then $F\in {\rm U}_{\alpha+1}(X\times Y,Z)$ ($F\in {\rm L}_{\alpha+1}(X\times Y,Z)$).
\end{theorem}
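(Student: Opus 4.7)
The plan is to mimic the proof of Theorem~\ref{th:1.2} almost verbatim, with the functional-class ingredients replaced by their classical Borel-class counterparts: ``open'' in place of ``functionally open'', the symbols $\mathcal{A}_\alpha$ and $\mathcal{M}_\alpha$ understood in the classical additive/multiplicative-class sense, and Theorem~\ref{th:2.1} in place of Lemma~\ref{lem:1.1}. Perfect normality of $Z$ produces, for each closed $W\subseteq Z$, a continuous $\varphi:Z\to[0,1]$ with $W=\varphi^{-1}(0)$; set $W_n=\varphi^{-1}([0,1/n])$ and $G_n=\varphi^{-1}([0,1/n))$, so $W=\bigcap_n W_n=\bigcap_n G_n$. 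For the compact-valued (upper) case define
\[
C_{m,n}=\bigcup_{i\in I_n}\bigl(A_{i,n}\times F_{i,n}^-(G_m)\bigr),\qquad C=\bigcap_{m}\bigcup_{n\ge m} C_{n,m};
\]
for the multivalued (lower) case replace $F_{i,n}^-$ by $F_{i,n}^+$ throughout.

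The identification $C=F^-(W)$ (respectively $C=F^+(W)$) is obtained by the Vietoris-convergence argument from Theorem~\ref{th:1.2} with no change: condition~(1) lets one pull the Vietoris neighborhoods $\{B:B\cap G_m\ne\emptyset\}$ and $\{B:B\cap W_{m_0}=\emptyset\}$ (or their $\subseteq$-analogues) back along the sequence $(x_{i_n,n},y_0)$, and compactness of $F(x_0,y_0)$ is used exactly as there to reduce ``$F(x_0,y_0)\subseteq Z\setminus W$'' to ``$F(x_0,y_0)\subseteq Z\setminus W_{m_0}$'' for a single $m_0$.

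The Borel-class bookkeeping runs as follows. Each $A_{i,n}\in\mathcal{A}_\alpha(X)$ by hypothesis, each $F_{i,n}^-(G_m)\in\mathcal{A}_\alpha(Y)$ by condition~(2) applied to the open set $G_m$, and hence $A_{i,n}\times F_{i,n}^-(G_m)=\pi_X^{-1}(A_{i,n})\cap\pi_Y^{-1}(F_{i,n}^-(G_m))\in\mathcal{A}_\alpha(X\times Y)$ via continuity of the projections together with closure of the additive class under finite intersections (the case $\alpha=0$ is immediate). Split $I_n=\bigsqcup_k I_{n,k}$ into locally finite subfamilies; local finiteness in $X$ lifts through $\pi_X$ to local finiteness of $(A_{i,n}\times F_{i,n}^-(G_m):i\in I_{n,k})$ in $X\times Y$. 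Theorem~\ref{th:2.1} then yields $\bigcup_{i\in I_{n,k}}\bigl(A_{i,n}\times F_{i,n}^-(G_m)\bigr)\in\mathcal{A}_\alpha(X\times Y)$ for each $k$, so $C_{m,n}\in\mathcal{A}_\alpha(X\times Y)$, $C\in\mathcal{M}_{\alpha+1}(X\times Y)$, and $F^+(Z\setminus W)=(X\times Y)\setminus C\in\mathcal{A}_{\alpha+1}(X\times Y)$.

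The delicate point is the invocation of Theorem~\ref{th:2.1} with ambient space $X\times Y$, because Theorem~\ref{th:2.1} assumes perfectness of the ambient space and here only $Y$ is perfect. The way through is to exploit the special rectangular shape of the sets being unioned: at the base of the induction on $\alpha$ the closed rectangles $F\times G$ already form a locally finite family of closed sets in $X\times Y$ (and a locally finite union of closed sets is closed in any space), while at successor and limit ordinals one unwinds the $Y$-factor via perfectness of $Y$ and controls the $X$-factor through first-coordinate local finiteness. Running the induction of Theorem~\ref{th:2.1} in this restricted product setting, rather than appealing to perfectness of $X\times Y$, is the main technical work.
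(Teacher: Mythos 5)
Your architecture is the one the paper intends (the paper itself only says Theorem~\ref{th:2.2} ``can be proved similarly'' to Theorem~\ref{th:1.2} with Theorem~\ref{th:2.1} replacing Lemma~\ref{lem:1.1}), and you are right to worry about invoking Theorem~\ref{th:2.1} inside $X\times Y$, which is not assumed perfect. But there is a gap one step \emph{earlier}, at a point you dismiss as routine: the claim that $A_{i,n}\times F_{i,n}^-(G_m)=\pi_X^{-1}(A_{i,n})\cap\pi_Y^{-1}(F_{i,n}^-(G_m))$ lies in the $\alpha$'th additive class of $X\times Y$ ``by closure of the additive class under finite intersections.'' In a non-perfect space the Kuratowski classes are not monotone and the additive class $\alpha$ is \emph{not} closed under finite intersections once $\alpha\geq 2$. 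Concretely, let $X=\omega_1+1$ and $Y=[0,1]$. The set $\{\omega_1\}$ is closed in $X$, hence of additive class $2$; the set $B$ of irrationals is $G_\delta$ in $Y$, hence of additive class $2$; yet $\{\omega_1\}\times B$ is not of additive class $2$ in $X\times Y$, i.e.\ it is not a countable union of sets each closed or $G_\delta$ in $X\times Y$. Indeed, a closed $S\subseteq\{\omega_1\}\times B$ projects onto a closed, hence nowhere dense, subset of $[0,1]$; and if $S=\bigcap_n U_n$ is a $G_\delta$ contained in $\{\omega_1\}\times B$ whose projection is dense in some $(a,b)$, then picking a countable dense set $\{t_j\}$ in that projection and basic boxes $(\alpha_n(t_j),\omega_1]\times J_n(t_j)\subseteq U_n$, the supremum $\alpha^*=\sup_{n,j}\alpha_n(t_j)<\omega_1$ gives $\{\alpha^*+1\}\times\bigl(\bigcap_n\bigcup_j J_n(t_j)\bigr)\cap\bigl((\alpha^*,\omega_1]\times(a,b)\bigr)\subseteq S$, nonempty by Baire, contradicting $S\subseteq\{\omega_1\}\times[0,1]$; so such a decomposition would make the irrationals meager. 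This configuration is admissible under the hypotheses (take the two-element cover $\{\{\omega_1\},[0,\omega_1)\}$, $x_{1,n}=\omega_1$, and $F^{\omega_1}$ sending irrationals to $\{1\}$ and rationals to $\{0\}$), so it breaks the proof's intermediate claim, though not necessarily the theorem's conclusion.

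Thus both the single-rectangle step and the locally-finite-union step need the ``rectangular'' refinement you only gesture at, and it is not evident that such a refinement recovers class $\alpha+1$ rather than $\alpha+2$ or $\alpha+3$: the natural class of a rectangle $\mathcal{A}_\alpha(X)\times\mathcal{A}_\alpha(Y)$ in a non-perfect product is only $\mathcal{M}_{\alpha+1}(X\times Y)$. The contrast with Theorem~\ref{th:1.2} is instructive: there the \emph{functional} hierarchy is used, and it is monotone in every topological space (every functionally closed set is a countable intersection of functionally open ones), which is precisely what makes the product and intersection bookkeeping costless; that mechanism is unavailable here because $X$ is arbitrary. Your argument is complete and correct for $\alpha\leq 1$, where the rectangles decompose into closed boxes and a locally finite union of closed sets is closed in any space; for $\alpha\geq 2$ the ``main technical work'' you defer is not a routine rerun of Theorem~\ref{th:2.1} but the actual crux, and it must begin already at the level of a single rectangle.
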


\begin{corollary}\label{cor:1.3} Let $X$ be a $PP$-space, $Y$ be a perfect space, $Z$  be a perfectly normal spaces and $\alpha$ be an at most countable ordinal. Then ${\rm CU}_\alpha(X,Y,Z)\subseteq {\rm L}_{\alpha+1}(X\times Y,Z)$ and ${\rm CL}_\alpha(X,Y,Z)\subseteq {\rm U}_{\alpha+1}(X\times Y,Z)$.
\end{corollary}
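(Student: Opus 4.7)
The plan is to deduce this corollary directly from Theorem \ref{th:2.2}, exactly in the manner of the functional version (first Corollary \ref{cor:1.3}). Since $X$ is a PP-space, Definition \ref{def:4} supplies a sequence $({\mathcal U}_n)_{n=1}^\infty$ of locally finite covers ${\mathcal U}_n = (U_{i,n} : i \in I_n)$ of $X$ together with a matching sequence of families of points $(x_{i,n} : i \in I_n)$ satisfying the characteristic ``shrinking'' property. Set $A_{i,n} := U_{i,n}$; these are open in $X$ and so belong to the additive class $\alpha$, and each locally finite family ${\mathcal U}_n$ is trivially $\sigma$-locally finite. Thus the covers ${\mathcal A}_n = (A_{i,n} : i \in I_n)$ and the points $x_{i,n}$ already have the shape required by Theorem \ref{th:2.2}.

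Next I verify its two hypotheses. Condition $2)$ is built into the definition of ${\rm CU}_\alpha$ and ${\rm CL}_\alpha$: membership of $F$ in ${\rm CU}_\alpha(X,Y,Z)$ (respectively ${\rm CL}_\alpha(X,Y,Z)$) forces $F^x \in {\rm U}_\alpha(Y,Z)$ (resp. $F^x \in {\rm L}_\alpha(Y,Z)$) for every $x\in X$, hence in particular for every $x \in D = \{x_{i,n} : n\in\mathbb N,\, i\in I_n\}$. For condition $1)$, fix $x \in X$, $y \in Y$ and an arbitrary sequence $(i_n)_{n\in\omega}$ with $x \in U_{i_n,n}$, and let $O$ be a Vietoris neighborhood of $F(x,y)$. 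Continuity of $F_y$ at $x$ (the ``C'' in ${\rm CU}_\alpha$/${\rm CL}_\alpha$) yields an open neighborhood $U$ of $x$ with $F_y(U) \subseteq O$; the PP-space property applied to this $U$ produces $n_0$ such that $x_{i,n} \in U$ for every $n \geq n_0$ and every $i \in I_n$ with $x \in U_{i,n}$. Specializing to $i = i_n$ gives $F(x_{i_n,n}, y) = F_y(x_{i_n,n}) \in O$ for all $n \geq n_0$, which is precisely the required Vietoris convergence.

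With both hypotheses in hand, Theorem \ref{th:2.2} immediately delivers $F \in {\rm L}_{\alpha+1}(X \times Y, Z)$ in the upper case and $F \in {\rm U}_{\alpha+1}(X \times Y, Z)$ in the lower case, establishing the two claimed inclusions. I expect no real obstacle: the PP-space axiom is tailored precisely to supply the sampling points and shrinking covers that Theorem \ref{th:2.2} needs, and the verification of condition $1)$ is the routine bridge between continuity in the first variable and prescribed sampling along the given covers.
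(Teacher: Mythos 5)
Your proposal is correct and is exactly the paper's own argument: the paper proves this corollary by the same reduction as its functional analogue, namely taking the locally finite covers $\mathcal U_n$ and sample points $x_{i,n}$ from the PP-space definition, setting $A_{i,n}=U_{i,n}$, and invoking Theorem \ref{th:2.2}. Your explicit verification of condition $1)$ via continuity of $F_y$ together with the PP-property is precisely the bridge the paper leaves implicit (cf.\ Remark \ref{rem:1}), so there is nothing to add.
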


\begin{corollary}\label{cor:2.3} Let $X$ be a strong $PP$-space, $Y$ and $Z$ be topological spaces and $\alpha$ be an at most countable ordinal. Then ${\rm C\overline{U}}_\alpha(X,Y,Z)\subseteq {\rm L}_{\alpha+1}(X\times Y,Z)$ and ${\rm C\overline{L}}_\alpha(X,Y,Z)\subseteq {\rm U}_{\alpha+1}(X\times Y,Z)$.
\end{corollary}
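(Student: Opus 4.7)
The plan is to reduce the claim to Theorem \ref{th:2.2} by invoking the strong $PP$-space structure for the dense subset supplied by the membership hypothesis.

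Fix $F \in {\rm C\overline{U}}_\alpha(X,Y,Z)$. By definition, $F$ is continuous in the first variable and there exists a dense set $D \subseteq X$ such that $F^x \in {\rm U}_\alpha(Y,Z)$ for every $x \in D$. Since $X$ is a strong $PP$-space, I would apply Definition \ref{def:4} to this specific $D$, obtaining a sequence $({\mathcal U}_n)_{n=1}^\infty$ of locally finite covers ${\mathcal U}_n = (U_{i,n} : i \in I_n)$ of $X$, together with a sequence $((x_{i,n} : i \in I_n))_{n=1}^\infty$ of points of $D$ with the following convergence property: for every $x \in X$ and every neighbourhood $V$ of $x$, there exists $n_0$ such that for $n \geq n_0$ the inclusion $x \in U_{i,n}$ implies $x_{i,n} \in V$.

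Next I would set $A_{i,n} := U_{i,n}$ for all $n, i$. Each $A_{i,n}$ is open, hence of additive class $0$ and a fortiori of additive class $\alpha$; the covers are locally finite and therefore $\sigma$-locally finite. Condition (2) of Theorem \ref{th:2.2} holds immediately from $x_{i,n} \in D$. To verify condition (1), fix $x \in X$, $y \in Y$, a sequence $(i_n)_{n \in \omega}$ with $x \in U_{i_n,n}$ for each $n$, and an arbitrary Vietoris neighbourhood $O$ of $F(x,y)$. Continuity of the slice $F_y = F(\cdot, y)$ at $x$ yields a neighbourhood $V$ of $x$ such that $F(v,y) \in O$ for every $v \in V$; the strong $PP$-space property then produces $n_0$ with $x_{i_n, n} \in V$ for all $n \geq n_0$, and hence $F(x_{i_n, n}, y) \in O$ for all such $n$, which is precisely the required Vietoris convergence.

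Applying Theorem \ref{th:2.2} then delivers $F \in {\rm L}_{\alpha+1}(X \times Y, Z)$, and the lower-class inclusion is obtained identically by interchanging ${\rm U}$ and ${\rm L}$ throughout. The argument is structurally parallel to the preceding corollary, and I do not expect a genuine obstacle; the only substantive point is the observation that the \emph{strong} variant of Definition \ref{def:4} is engineered precisely so that the indexing points $x_{i,n}$ can be chosen inside any prescribed dense set, which is exactly what bridges the dense-only hypothesis on the slices $F^x$ with the uniform input demanded by Theorem \ref{th:2.2}.
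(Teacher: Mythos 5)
Your proof is correct and matches the paper's own (implicit) argument: the authors obtain this corollary exactly by applying Definition \ref{def:4} to the given dense set $D$, putting $A_{i,n}=U_{i,n}$, and feeding the resulting data into Theorem \ref{th:2.2}, with the Vietoris convergence in condition 1) supplied by continuity of the slices $F_y$ just as you verify. The only caveat --- inherited from the paper's printed statement rather than from your argument --- is that Theorem \ref{th:2.2} requires $Y$ perfect and $Z$ perfectly normal, so those hypotheses must be read into the corollary for your (and the paper's) reduction to go through.
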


\begin{remark} Since every metrizable space is a strong $PP$-space, Theorem \ref{th:2.2} generalizes Theorem \ref{th:0.2}. Moreover, Theorem \ref{th:2.2} generalizes Theorem \ref{th:0.1} (it is enough to argue analogously as in Remark \ref{rem:1}).
\end{remark}

The following example shows that the condition of map to be compact-valued is essential in Theorems \ref{th:1.2} and \ref{th:2.2}.

\begin{proposition} \label{pr:2}
There exists a separately continuous lower semi-continuous map $F:[0,1]^2\to[0,1]$ which is upper (functionally) nonmeasurable.
\end{proposition}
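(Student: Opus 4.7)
The plan is to exhibit an explicit $F$ built from a non-Borel set, mimicking the single-variable construction of Example~\ref{ex:1} but transferring the non-measurability to the plane in such a way that every section remains continuous. Pick a non-Borel set $A\subseteq[0,1]$ and set $E=\{(x,x):x\in A\}\subseteq[0,1]^2$. Since the diagonal map $x\mapsto(x,x)$ is a homeomorphism from $[0,1]$ onto the diagonal of $[0,1]^2$, the set $E$ is non-Borel in $[0,1]^2$. Now define
\[
F(x,y)=\begin{cases}[0,1],& (x,y)\in E,\\ [0,1),& (x,y)\notin E.\end{cases}
\]

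I would then verify three things. First, $F$ is lower semi-continuous on $[0,1]^2$: for every nonempty open $U\subseteq[0,1]$ the set $[0,1)\cap U$ is nonempty (as $\{1\}$ is not open in $[0,1]$), hence $F(x,y)\cap U\neq\emptyset$ for every $(x,y)$, so $F^-(U)=[0,1]^2$ is open. Second, $F$ is separately continuous: for fixed $x$, the section $F^x$ takes only the values $[0,1]$ and $[0,1)$, differing on at most one point (namely $y=x$, and only when $x\in A$); the only nontrivial upper preimage is $(F^x)^+([0,1))$, which equals $[0,1]\setminus\{x\}$ when $x\in A$ and $[0,1]$ otherwise, and in both cases is open. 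The argument for $F_y$ is symmetric. Third, $F$ is upper nonmeasurable: the open set $[0,1)\subseteq[0,1]$ satisfies $F^+([0,1))=[0,1]^2\setminus E$, which is non-Borel in $[0,1]^2$; since $[0,1]^2$ is perfectly normal, open sets coincide with functionally open sets and Borel sets coincide with functionally Borel sets, so $F\notin\bigcup_{\alpha<\omega_1}{\rm U}^f_\alpha([0,1]^2,[0,1])$.

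I do not expect a genuine obstacle: the whole point of choosing $E$ to be the ``diagonal copy'' of a non-Borel set is that every horizontal and vertical section of $E$ is either empty or a singleton, which makes the sections of $F$ trivially well-behaved, while the only jump in the set-valued output, from $[0,1)$ to $[0,1]$, involves the single ``invisible'' point $1$ which is not isolated from $[0,1)$ by any open set. The only bookkeeping step that deserves care is recording why the non-measurability really does survive: one must check that the only open $U\subsetneq[0,1]$ for which $F^+(U)$ is non-trivial is $U=[0,1)$, so that the non-Borel set $[0,1]^2\setminus E$ actually appears as a preimage, rather than being washed out by either the empty set or the whole square.
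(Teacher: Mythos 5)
Your construction is correct and follows the same underlying idea as the paper's proof: transplant the non-Borel pathology of Example~\ref{ex:1} onto the diagonal, so that every horizontal and vertical section meets the bad set in at most one point and the jump from $[0,1)$ to $[0,1]$ is invisible to both semi-continuity conditions, while $F^+([0,1))=[0,1]^2\setminus E$ remains non-Borel. The only difference is cosmetic: the paper replaces your constant value $[0,1)$ off the diagonal copy of $A$ by $[0,g(x,y)]$ for a continuous $g$ equal to $1$ exactly on the diagonal, which additionally makes the map jointly continuous off $\Delta$, but this extra feature is not needed for the statement and your simpler version verifies all three required properties.
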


\begin{proof}
Let $A\subseteq[0,1]$ be a non-measurable set. We consider a continuous function $g:[0,1]^2\to[0,1]$, $g(x,y)=\tfrac{2(x+1)(y+1)}{(x+1)^2+(y+1)^2}$, and
a multivalued maps $F:[0,1]^2\to[0,1]$,
 \begin{equation*}
 F(x,y)=\left\{\begin{array}{ll}
                        [0,g(x,y)], & (x,y)\in [0,1]^2\setminus \{(z,z):z\in A\}\\
                        {[}0,1), & (x,y)\in \{(z,z):z\in A\}.
                       \end{array}
 \right.
 \end{equation*}
We put $\Delta= \{(x,x):x\in [0,1]\}$. Since the function $g$ is continuous, the map $F$ is separately continuous at every point of the set $[0,1]^2\setminus \{(z,z):z\in A\}$ and jointly continuous at every point of the set $[0,1]^2\setminus\Delta$. Since the map $H(x,y)=[0,g(x,y)]$ is continuous and $H^-(G)=F^-(G)$ for every open set $G\subseteq[0,1]$, the map $F$ is lower semi-continuous. Moreover, $F(x,y)\subseteq [0,1)\subseteq F(z,z)$ for every $(x,y)\in [0,1]^2\setminus\Delta$ and $z\in[0,1]$. Therefore, $F$ is separately upper semi-continuous at every point of the set $\Delta$. Thus, $F$ is a separately continuous lower semi-continuous map. According to Example \ref{ex:1}, the restriction $F|_\Delta$ is upper nonmeasurable. Hence, $F$ is upper nonmeasurable too.
\end{proof}

\bibliographystyle{amsplain}

\end{document}